\documentclass{amsart}

\usepackage{amssymb}
\usepackage{graphicx}
\usepackage{pb-diagram,pb-xy} 
\usepackage[cmtip,arrow]{xy} 


\numberwithin{equation}{section}

\theoremstyle{plain}
\newtheorem{theorem}[equation]{Theorem}
\newtheorem{thm}[equation]{Theorem}
\newtheorem{lemma}[equation]{Lemma}

\newtheorem{corollary}[equation]{Corollary}
\newtheorem{proposition}[equation]{Proposition}
\newtheorem{prop}[equation]{Proposition}
\newtheorem*{thm:chainrule}{Theorem \ref{thm:chainrule}}

\theoremstyle{definition}
\newtheorem{definition}[equation]{Definition}
\newtheorem{convention}[equation]{Convention}
\newtheorem{remark}[equation]{Remark}
\newtheorem{remarks}[equation]{Remarks}
\newtheorem{example}[equation]{Example}
\newtheorem{examples}[equation]{Examples}


\newcommand{\homeq}{\simeq}                     
\newcommand{\smsh}{\wedge}                      
\newcommand{\wdge}{\vee}                        

\newcommand{\Wdge}{\bigvee}                     


\DeclareMathOperator*{\hocolim}{hocolim}

\DeclareMathOperator*{\holim}{holim}

\newcommand{\creff}{\operatorname{cr} }
\DeclareMathOperator*{\hofib}{hofib}
\DeclareMathOperator*{\hocofib}{hocofib}

\newcommand{\spectra}{\mathsf{Spec}}            

\newcommand{\weq}{\; \tilde{\longrightarrow} \;}      
\newcommand{\fib}{\twoheadrightarrow}           

\newcommand{\der}{\partial}                     

\newcommand{\ord}[1]{$#1$\textsuperscript{th}}

\begin{document}

\title[Chain rule for functors of spectra]{A chain rule for Goodwillie derivatives of functors from spectra to spectra}
\author{Michael Ching}
\subjclass[2000]{55P42,55P65}
\date{\today}

\begin{abstract}
We prove a chain rule for the Goodwillie calculus of functors from spectra to spectra. We show that the (higher) derivatives of a composite functor $FG$ at a base object $X$ are given by taking the composition product (in the sense of symmetric sequences) of the derivatives of $F$ at $G(X)$ with the derivatives of $G$ at $X$. We also consider the question of finding $P_n(FG)$, and give an explicit formula for this when $F$ is homogeneous.
\end{abstract}

\maketitle

\thispagestyle{empty}

\section*{Introduction}

In this paper we prove a version of the chain rule in the Goodwillie calculus of functors from spectra to spectra. Let $\spectra$ be a model for the stable homotopy category and let $F: \spectra \to \spectra$ be a homotopy functor (i.e. $F$ preserves stable weak equivalences). Fix a base object $X \in \spectra$ and let $\spectra/X$ denote the category of spectra over $X$. Then the methods of Goodwillie \cite{goodwillie:2003} can be used to construct a \emph{Taylor tower} for $F$, analogous to the Taylor series of a function of a real variable, based at the object $X$. This tower is a sequence of functors $\spectra/X \to \spectra$:
\[ F(Y) \to \dots \to P^X_nF(Y) \to P^X_{n-1}F(Y) \to \dots \to P^X_0F(Y) = F(X) \]
that, for each map $Y \to X$, interpolates between $F(Y)$ and $F(X)$.

Again following Goodwillie, the \emph{layers} of the Taylor tower are the homotopy fibres
\[ D^X_nF := \hofib(P^X_nF \to P^X_{n-1}F). \]
There is a formula for these layers of the form
\[ D^X_nF(Y) \homeq (\der_nF(X) \smsh \hofib(Y \to X)^{\smsh n})_{h\Sigma_n}. \]
This formula holds when $Y$ is a finite cell object in $\spectra$ and for all $Y \to X$ if $F$ preserves filtered homotopy colimits. The object $\der_nF(X)$ is a spectrum with an action of the symmetric group $\Sigma_n$ and we refer to it as the \emph{\ord{n} derivative of $F$ at $X$}. (In the analogy with ordinary calculus, it plays the role of the \ord{n} derivative of $F$ evaluated at $X$.)

The derivatives of $F$ at $X$ together form a \emph{symmetric sequence} in $\spectra$, that is a sequence of objects with actions of the symmetric groups. We write $\der_*F(X)$ for this symmetric sequence. Recall that the category of symmetric sequences supports a monoidal product, called the \emph{composition product}, written $\circ$. (See Definition \ref{def:compprod}.)

The main result of this paper is the following.
\begin{thm:chainrule}
Let $F,G: \spectra \to \spectra$ be homotopy functors and suppose that $F$ preserves filtered homotopy colimits. Then we have the following formula:
\[ \der_*(FG)(X) \homeq \der_*F(GX) \circ \der_*G(X). \]
\end{thm:chainrule}
We call this a `chain rule' since it expresses the derivatives of a composite of functors in terms of the derivatives of the individual functors. It bears a striking similarity to the corresponding result for functions of real-variables \cite{johnson:2002}. Note that the condition that $F$ preserves filtered homotopy colimits is essential. See \ref{ex:counterexample} for a counterexample in the case that $F$ does not have this property.

The derivatives of a functor determine the layers in the Taylor tower, but there are still extension problems in recovering the whole tower. We therefore consider also the problem of expressing $P^X_n(FG)(Y)$ in terms of the Taylor towers of $F$ and $G$. Our method gives an answer to this question in the case that the map $Y \to X$ has a section, but there is no nice general formula and it is not clear that the answer can be used in practice to calculate Taylor towers. We however give explicit descriptions of $P_2(FG)$ and $P_3(FG)$ where the calculations are not so hard. Along the way, we will consider the case where $F$ is homogeneous for which there \emph{is} a simple formula for $P_n(FG)$. (See Theorem \ref{thm:hom}.)

We should point out that our proof is non-constructive in the sense that we do not define specific models for the derivatives of a functor that satisfy the equivalence of Theorem \ref{thm:chainrule}. This has the downside that we cannot use our proof to obtain explicit operad or cooperad structures on the derivatives of monads and comonads. In separate work between the author and Greg Arone, an alternative approach to the Theorem is being developed that \emph{does} give specific models for these derivatives and produces interesting operad and cooperad structures on the derivatives of certain functors. The current paper, on the other hand, has the advantage that its methods reveal something about the chain rule on the level of Taylor towers and not just for derivatives.

Finally, we remark that while we write out the proof for functors of spectra, many of the underlying ideas apply equally well to functors between any stable model categories. The statement of Theorem \ref{thm:chainrule} would need to be adjusted in this setting to take into account the more complicated nature of the derivatives of a functor in general. Many of the intermediate results, however, carry over directly. See Remark \ref{rem:stable} for more details.

\subsection*{Outline of the paper}
In \S\ref{sec:prelims}, we define precisely what we mean by the derivatives of a functor, we define the composition product of symmetric sequences, and state the main Theorem. The proof of this theorem occupies sections 2-4: in \S2 we describe a map that on \ord{n} derivatives will give the equivalence of the theorem; in \S3, we study the Taylor tower of $FG$ when $F$ is homogeneous, and in \S4, we complete the proof for all $F$. Section 3 includes our explicit description of $P_n(FG)$ when $F$ is homogeneous. In \S5, we look at the question of the full Taylor tower of $FG$ (for general $F$) and not just the derivatives, and give formulas for the lowest few terms in the tower. Finally, in \S6, we prove some fundamental results about Taylor towers of composite functors that make the rest of the paper possible. These results say that to calculate $P_n(FG)$ it is sufficient to know $P_nF$ and $P_nG$ (if $F$ is finitary).

\subsection*{Acknowledgements}
This work owes a considerable debt to conversations between the author and Greg Arone, and is related to a joint project to prove chain rules for derivatives in a much wider context. More specifically, one of the key ideas of this paper, namely that we can use the diagonal maps and co-cross-effects of McCarthy to study the chain rule, was suggested by Arone.

The author would also like to thank Tom Goodwillie for useful conversations related to the proof of Proposition \ref{prop:fundamental}.

\section{Preliminaries} \label{sec:prelims}
In this section, we describe the Taylor tower, and derivatives, for functors from spectra to spectra. We then recall the composition product for symmetric sequences and state our main theorem.

\begin{definition} \label{def:spectra}
Let $\spectra$ be one of the usual models for the stable homotopy category. Specifically we require that $\spectra$ be a symmetric monoidal proper simplicial cofibrantly generated model category. Any of the standard examples can be used, but in the proof of Proposition \ref{prop:fundamental} it will be convenient to take $\spectra$ to be the category of symmetric spectra based on simplicial sets, as in \cite{hovey/shipley/smith:2000}. We shall refer to the objects of $\spectra$ as \emph{spectra}.

For a fixed spectrum $X$, we write $\spectra/X$ for the category of spectra over $X$, that is whose objects are maps $Y \to X$ and $\spectra_X$ for the category of spectra over and under $X$, that is, the category whose objects are sequences $X \to Y \to X$ that compose to the identity on $X$.
\end{definition}

\begin{remark} \label{rem:stable}
The symmetric monoidal structure on $\spectra$, and the fact that it has the `usual' stable homotopy category are used only to express our chain rule in terms of derivatives. The construction of the map $\Delta$ (Definition \ref{def:delta} below), the result that it is a $D_n$-equivalence (Proposition \ref{prop:equiv}), and hence the results of \S\ref{sec:tower} on the Taylor tower of $FG$, require only that $\spectra$ be a (proper simplicial cofibrantly generated) \emph{stable} model category.

A stable model category is one in which the suspension functor is an equivalence on the homotopy category. From our point of view, the key property enjoyed by a stable model category is that homotopy pushout and homotopy pullback squares coincide. In particular, fibre and cofibre sequences coincide, and so linear functors preserve fibre sequences.
\end{remark}

\begin{definition} \label{def:functors}
We are interested in studying functors $F: \spectra \to \spectra$, but we shall need to impose various conditions. We make the following definitions:
\begin{itemize}
  \item $F$ is a \emph{homotopy functor} if it preserves weak equivalences;
  \item $F$ is \emph{simplicial} if it preserves the simplicial enrichment in $\spectra$, that is, if it induces maps of simplicial mapping objects;
  \item $F$ is \emph{finitary} if it preserves filtered homotopy colimits;
  \item $F$ is \emph{reduced} if $F(*) \homeq *$.
\end{itemize}
\end{definition}

\begin{convention} \label{convention}
All functors in this paper are assumed to be simplicial homotopy functors. The conditions of being finitary or reduced will be stated explicitly when needed. We shall also at times need functors of several variables, i.e. of the form $H:\spectra^k \to \spectra$. These are assumed to be simplicial homotopy functors with respect to each variable.
\end{convention}

\begin{definition}[Taylor tower for functors from spectra to spectra]
Goodwillie explicitly describes calculus for functors of topological spaces, but his ideas extend easily to a more general setting, for example, see Kuhn \cite{kuhn:2007}. In particular, a simplicial homotopy functor $F:\spectra \to \spectra$ has, for each $X \in \spectra$, a \emph{Taylor tower at $X$}, that is a sequence of functors
\[ F \to \dots \to P^X_nF \to P^X_{n-1}F \to \dots P^X_0F = F(X) \]
in which $P^X_nF$ is an \emph{$n$-excisive} functor $\spectra/X \to \spectra$, i.e. takes strongly homotopy cocartesian $(n+1)$-cubes in $\spectra/X$ to homotopy cartesian cubes in $\spectra$. (See \cite{goodwillie:1991}.) The \emph{\ord{n} layer} in the tower is the homotopy fibre
\[ D^X_nF := \hofib(P^X_nF \to P^X_{n-1}F). \]
and this is \emph{$n$-homogeneous}. We recall the precise formulation of the $P_n$ construction in Definition \ref{def:T_n}, and we refer to \cite[\S1]{goodwillie:2003} for the following properties satisfied by $P_n$ and $D_n$ which we shall use repeatedly:
\begin{itemize}
  \item $P_n$ and $D_n$ commute with finite homotopy limits (including homotopy fibres) and all homotopy colimits (since we are dealing with spectrum-valued functors).
\end{itemize}

We shall refer to $X$ as the \emph{base object} for the Taylor tower (or for the layers, or derivatives). It is the fixed point about which the Taylor expansion is taking place. When the base object is the trivial spectrum $*$, we write simply $P_nF$ and $D_nF$.
\end{definition}

\begin{definition}[Derivatives at $*$] \label{def:derivative-*}
The derivatives of a functor are the spectra that classify the homogeneous layers in the Taylor tower. Derivatives at base object $*$ work in the same way as for functors of spaces. Specifically, given $F: \spectra \to \spectra$, there is a spectrum $\der_nF$ with $\Sigma_n$-action such that
\[ D_nF(Y) \homeq (\der_nF \smsh Y^{\smsh n})_{h\Sigma_n} \]
for finite cell spectra $Y$ and for all $Y \in \spectra$ if $F$ is finitary. The spectra $\der_nF$ are the \emph{derivatives of $F$ at $*$}.

We can give a direct definition of $\der_nF$ as
\[ \der_nF := \creff_n(D_nF)(S,\dots,S) \]
where $\creff_n$ is the \ord{n} cross-effect construction (see \cite[\S3]{goodwillie:2003}), and $S$ denotes the sphere spectrum.
\end{definition}

\begin{definition}[Derivatives at a general $X$] \label{def:derivative}
The case of functors from spectra to spectra is not explicitly covered by \cite{goodwillie:2003} and for derivatives at a base object other than $*$, the situation is not quite the same as for functors of spaces. We make the following definition.

Let $F: \spectra \to \spectra$ be a homotopy functor. The \emph{\ord{n} derivative of $F$ at $X$} is defined to be the \ord{n} derivative at $*$ of the functor
\[ F_X(Z) = \hofib(F(Z \wdge X) \to F(X)) \]
That is, we set
\[ \der_nF(X) := \der_n(F_X). \]
Taking $X = *$, note that $F_*$ is the same as the `reduction' (or first cross-effect) of the functor $F$ and has equivalent derivatives, so that this definition of the derivative at $*$ agrees up to equivalence with that of Definition \ref{def:derivative-*}.
\end{definition}

\begin{remark} \label{rem:spaces}
In the case of functors of topological spaces, Goodwillie shows that the derivative at a base space $X \neq *$ can be thought of as a family of spectra parametrized over $X$ (see \cite[5.6]{goodwillie:2003}). In the case of spectra, we will see in the next proposition that the single spectrum $\der_nF(X)$ is enough to classify the homogeneous layers. This reflects the fact that the category of spectra over and under a fixed spectrum is Quillen equivalent again to $\spectra$ and is another consequence of the stability of the model category $\spectra$.
\end{remark}

\begin{proposition}[Classification of the layers] \label{prop:d_nF}
Let $F: \spectra \to \spectra$ be a homotopy functor and let $Y \to X$ be a map of spectra. Then
\[ D^X_nF(Y) \homeq (\der_nF(X) \smsh \hofib(Y \to X)^{\smsh n})_{h\Sigma_n}. \]
This holds whenever $\hofib(Y \to X)$ is equivalent to a finite cell spectrum, and for all $Y \to X$ if $F$ is finitary.
\end{proposition}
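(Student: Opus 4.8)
The plan is to reduce the statement to the classification of homogeneous functors at the trivial spectrum (Definition~\ref{def:derivative-*}), applied to the auxiliary functor $F_X$ of Definition~\ref{def:derivative}. First note that $F_X$ is reduced and that it is finitary whenever $F$ is: the functor $Z \mapsto Z \wdge X$ preserves filtered homotopy colimits and, in $\spectra$, finite homotopy limits commute with filtered homotopy colimits, so $\hofib(F(- \wdge X) \to F(X))$ does too. Thus Definition~\ref{def:derivative-*} applies to $F_X$ and gives
\[ D_n(F_X)(Z) \homeq (\der_n(F_X) \smsh Z^{\smsh n})_{h\Sigma_n} = (\der_nF(X) \smsh Z^{\smsh n})_{h\Sigma_n} \]
for every finite cell spectrum $Z$, and for all $Z$ if $F$ is finitary. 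Hence it suffices to produce a natural equivalence
\[ D^X_nF(Y) \homeq D_n(F_X)(\hofib(Y \to X)) \]
valid whenever $\hofib(Y \to X)$ is a finite cell spectrum, and for all $Y \to X$ if $F$ is finitary; the finiteness hypotheses then match up exactly.

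I would first handle the objects $j(Z) := (Z \wdge X \to X)$ of $\spectra/X$, with structure map the projection; this defines a functor $j \colon \spectra \to \spectra/X$. Since the forgetful functor $\spectra/X \to \spectra$ creates colimits, $j$ is essentially $- \wdge X$ and so preserves all homotopy colimits, in particular strongly homotopy cocartesian cubes, and it sends $*$ to the terminal object of $\spectra/X$. The key point is that $j$ intertwines the fattening constructions underlying the $T_n$-construction in the two categories: the fibrewise unreduced suspension of $Z \wdge X \to X$ is $\Sigma Z \wdge X \to X$, that is, $j(\Sigma Z)$, and more generally $j$ is compatible with the fibrewise joins with finite sets, so that $T_n(G \circ j) \homeq (T^X_n G) \circ j$ compatibly with the structure maps, for any $G \colon \spectra/X \to \spectra$. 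Passing to the homotopy colimit defining $P_n$ gives $P_n(G \circ j) \homeq (P^X_n G) \circ j$, and hence $D_n(G \circ j) \homeq (D^X_n G) \circ j$. Apply this with $G$ the restriction of $F$ to $\spectra/X$: since a constant functor is $n$-excisive and $D_n$ commutes with homotopy fibres, $D_n$ is unchanged by reduction, so $D_n(F \circ j) \homeq D_n((F \circ j)_{\mathrm{red}}) = D_n(F_X)$; and $\hofib(j(Z) \to X) \homeq Z$. This gives the required equivalence when $Y = j(Z)$.

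The substantial remaining point is to reduce an arbitrary $Y \to X$ to this case, using that $D^X_n F$ is an $n$-homogeneous functor $\spectra/X \to \spectra$. Here I would invoke the (routine, but not purely formal) extension of Goodwillie's structure theory of homogeneous functors to functors out of $\spectra/X$: such a functor is recovered, via the $h\Sigma_n$-orbit formula, from its $n$-th cross-effect, a symmetric multilinear functor $(\spectra/X)^n \to \spectra$; and a multilinear functor into $\spectra$ factors, in each variable, through the stabilization of $\spectra/X$. Since $\spectra$ is already stable, that stabilization is again $\spectra$ --- this is the Quillen equivalence $\spectra_X \homeq \spectra$ referred to in Remark~\ref{rem:spaces} --- and the stabilization functor $\spectra/X \to \spectra$ is precisely $(Y \to X) \mapsto \hofib(Y \to X)$. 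It follows that $D^X_n F(Y)$ depends, up to natural equivalence, only on $\hofib(Y \to X)$, and in particular agrees with its value on $j(\hofib(Y \to X))$; together with the previous paragraph this completes the proof. I expect this last step --- assembling enough of the relative homogeneous-functor theory, in particular identifying the stabilization of $\spectra/X$ with $\spectra$ via $\hofib(- \to X)$ --- to be the main obstacle; everything before it is essentially careful bookkeeping with Goodwillie's constructions.
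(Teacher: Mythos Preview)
Your proposal is correct and hews close to the paper's argument: both reduce to showing $D^X_nF(Y) \homeq D_n(F_X)(\hofib(Y \to X))$, establish this first for retractive $Y$, and then reduce the general case to the retractive one. The paper organizes the retractive step dually to yours: rather than showing that $j = (- \wdge X)$ commutes with the $T_n$-construction, it uses Lemma~\ref{lem:hofib} (that $\hofib(- \to X)$ preserves homotopy cocartesian squares, hence commutes with Taylor towers) to identify $D_n(F_X)(\hofib(Y \to X))$ directly with $D^X_n$ of the functor $Y \mapsto F_X(\hofib(Y \to X))$ on $\spectra/X$; this puts both sides on the same footing as $n$-homogeneous functors on $\spectra/X$. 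For the final reduction, the paper simply invokes Goodwillie's \cite[4.1]{goodwillie:2003} --- two homogeneous functors on $\spectra/X$ agree iff they agree after restriction along $\spectra_X \to \spectra/X$ --- which is exactly the statement you propose to re-derive via cross-effects and stabilization; your instinct that this is the substantive step is correct, but the result is already available as a black box, so the paper's route is shorter.
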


The following lemma will be useful in proving this proposition:

\begin{lemma} \label{lem:hofib}
The functor $\spectra/X \to \spectra$ given by
\[ (Y \to X) \mapsto \hofib(Y \to X) \]
preserves homotopy cocartesian squares.
\end{lemma}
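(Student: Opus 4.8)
The plan is to deduce this from stability of $\spectra$, in the two forms recorded in Remark \ref{rem:stable}: homotopy cocartesian and homotopy cartesian squares coincide, and fibre sequences coincide with cofibre sequences. First I would unwind the hypothesis. A homotopy cocartesian square in $\spectra/X$ is the same data as a square $\mathcal{P}$ of spectra, with vertices $A,B,C,D$ say, that is homotopy cocartesian in $\spectra$, together with a compatible family of structure maps to $X$ --- i.e.\ a natural transformation $\eta\colon \mathcal{P}\to \mathrm{const}_X$ of square diagrams. (One uses here that the forgetful functor $\spectra/X\to\spectra$ creates homotopy colimits, so cocartesianness is detected after forgetting to $\spectra$ and does not involve the maps to $X$.) Applying $\hofib(-\to X)$ vertexwise produces a square $\mathcal{G}$ with vertices $\hofib(A\to X),\dots,\hofib(D\to X)$, and the goal is that $\mathcal{G}$ be homotopy cocartesian in $\spectra$.

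For each vertex $Y$ of $\mathcal{P}$ the defining fibre sequence $\hofib(Y\to X)\to Y\to X$ is, by stability, also a cofibre sequence, and this is natural in $Y$; hence the vertexwise homotopy cofibre of the map of squares $\mathcal{G}\to\mathcal{P}$ is the constant square $\mathrm{const}_X$, so $\mathcal{G}\to\mathcal{P}\to\mathrm{const}_X$ is a vertexwise cofibre sequence of square diagrams. Now I would apply the total-cofibre functor $\mathrm{tcof}$ on square diagrams in $\spectra$ (the iterated homotopy cofibre of a square). It has two relevant features: a square is homotopy cocartesian iff its total cofibre is contractible, and $\mathrm{tcof}$, being a finite homotopy colimit, preserves cofibre sequences. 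Applying it gives a cofibre sequence $\mathrm{tcof}(\mathcal{G})\to\mathrm{tcof}(\mathcal{P})\to\mathrm{tcof}(\mathrm{const}_X)$ in $\spectra$, whose middle term is contractible since $\mathcal{P}$ is homotopy cocartesian and whose last term is contractible since any constant square is homotopy cocartesian; therefore $\mathrm{tcof}(\mathcal{G})\homeq *$, which says exactly that $\mathcal{G}$ is homotopy cocartesian.

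Two equivalent variants would work just as well: by stability it suffices to show $\mathcal{G}$ homotopy \emph{cartesian}, which follows by feeding the same cofibre sequence of squares into the exact total-homotopy-\emph{fibre} functor; or, most briefly, $\hofib(-\to X)$ factors through the arrow category $\spectra^{[1]}$ followed by the homotopy-fibre functor $\spectra^{[1]}\to\spectra$, a finite homotopy limit functor, and stability is exactly what makes finite homotopy limit functors commute with homotopy pushouts. I do not expect a genuine obstacle here; the only substantive ingredient is stability, isolated in Remark \ref{rem:stable}, and the one point needing care --- identifying the vertexwise cofibre of $\mathcal{G}\to\mathcal{P}$ with $\mathrm{const}_X$ naturally in the square variable --- is immediate from the naturality in $Y$ of the sequence $\hofib(Y\to X)\to Y\to X$.
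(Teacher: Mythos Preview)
Your proposal is correct, and your last variant is exactly the paper's argument: after observing that cocartesianness in $\spectra/X$ is detected in $\spectra$, the paper uses stability to pass to a homotopy \emph{cartesian} square and then simply invokes commutativity of homotopy limits to conclude that the square of homotopy fibres is cartesian (hence cocartesian). Your main presentation via the total-cofibre functor is a valid but slightly more elaborate packaging of the same idea; the paper's one-line ``homotopy limits commute'' is the more direct phrasing you list at the end.
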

\begin{proof}
Suppose we have a commutative square of spectra
\[ \begin{diagram}
  \node{A} \arrow{e} \arrow{s} \node{B} \arrow{s} \\
  \node{B'} \arrow{e} \node{C}
\end{diagram} \]
together with a map $C \to X$ (which makes all the spectra in this diagram into objects in $\spectra/X$). The forgetful functor $\spectra/X \to \spectra$ reflects homotopy colimits, and homotopy cartesian and cocartesian are the same for spectra, so it is enough to show that if the above square is homotopy cartesian, then so is the corresponding square of homotopy fibres. This follows from the commutativity of homotopy limits.
\end{proof}

\begin{proof}[Proof of Proposition \ref{prop:d_nF}]
If $F$ is finitary, then so is $F_X$, so the right-hand side of the claimed equivalence is the same as
\[ D_n(F_X)(\hofib(Y \to X)), \]
by definition of $\der_nF(X)$ and the usual derivative formula for $D_n(F_X)$.

By Lemma \ref{lem:hofib}, the functor $Y \mapsto \hofib(Y \to X)$ commutes with the Taylor tower constructions and so we have
\[ D_n(F_X)(\hofib(Y \to X)) \homeq D^X_n(Y \mapsto F_X(\hofib(Y \to X))) \]
which by the definition of $F_X$ is the same as
\[ \tag{*} D^X_n(Y \mapsto \hofib(F(\hofib(Y \to X) \wdge X) \to FX)). \]
Now Goodwillie shows in \cite[4.1]{goodwillie:2003} that two homogeneous functors on $\spectra/X$ are equivalent if and only if they are equivalent when pre-composed with the forgetful functor $\spectra_X \to \spectra/X$, that is, if and only if they are equivalent when applied to maps $Y \to X$ that have a chosen section. (To be precise, Goodwillie proves this in the case of functors of topological spaces, but the same proof applies to functors of spectra.)

If the map $Y \to X$ has a section, then we have an equivalence
\[ \hofib(Y \to X) \wdge X \weq Y \]
and so
\[ \hofib(F(\hofib(Y \to X) \wdge X) \to FX) \homeq \hofib(FY \to FX). \]
It follows that (*) is equivalent to
\[ D^X_n(Y \mapsto \hofib(FY \to FX)). \]
But $D^X_n$ commutes with homotopy fibres and is trivial when applied to the constant functor $Y \mapsto FX$. We therefore get
\[ D^X_n(Y \mapsto \hofib(FY \to FX)) \homeq D^X_nF \]
which completes the proof of the Proposition.
\end{proof}

\begin{example} \label{ex:identity}
The identity functor on topological spaces has an interesting calculus, but this is not the case for the identity on $\spectra$. Since homotopy cartesian and cocartesian squares coincide for spectra, it follows that the identity is linear and we have
\[ P^X_1I(Y) = Y, \quad D^X_1I(Y) = \hofib(Y \to X)\]
and hence
\[ \der_nI(X) \homeq \begin{cases} S, & \text{if $n = 1$}; \\ *, & \text{otherwise}. \end{cases} \]
\end{example}

We now introduce the composition product of symmetric sequences, making sure we are precise about the way partitions are handled.

\begin{definition}[Partitions]
For a positive integer $n$, let $P(n)$ denote the set of unordered partitions of $n$. An element of $P(n)$ can then be written uniquely as a nondecreasing sequence of positive integers whose sum is $n$. Thus, for example, $P(3) = \{(3),(1,2),(1,1,1)\}$, and so on. We can also specify an element $\lambda$ of $P(n)$ uniquely by writing
\[ n = k_1 l_1 + \dots + k_r l_r \]
for some $k_i \geq 1$ and $1 \leq l_1 < \dots < l_r$. The integers $l_i$ are the numbers that appear in the partition $\lambda$ and $k_i$ is the number of times that $l_i$ appears.

For such a partition $\lambda \in P(n)$ we write
\[ H(\lambda) = (\Sigma_{l_1} \wr \Sigma_{k_1}) \times \dots \times (\Sigma_{l_r} \wr \Sigma_{k_r}) \leq \Sigma_n. \]
Here the wreath product $\Sigma_{l_i} \wr \Sigma_{k_i}$ is the subgroup of $\Sigma_{k_i l_i}$ generated by permutations of $k_i$ blocks of size $l_i$, and by the permutations within each block. We identify their product with a subgroup of $\Sigma_n$ using the decomposition $n = k_1 l_1 + \dots + k_r l_r$.

We will not use this fact explicitly, but note that the subgroup $H(\lambda)$ is the stabilizer of the transitive action of $\Sigma_n$ on the set of partitions of a set of $n$ elements that are of `type' $\lambda$. The cosets of $H(\lambda)$ thus correspond bijectively with such partitions.
\end{definition}

\begin{definition}(Composition product of symmetric sequences) \label{def:compprod}
A \emph{symmetric sequence} in $\spectra$ is a sequence of spectra $A = (A_1,A_2,\dots)$ with an action of $\Sigma_n$ on $A_n$. Thus the collection of all derivatives of a functor $F$ at a fixed $X$ forms a symmetric sequence $\der_*F(X)$.

Let $A,B$ be two symmetric sequences in $\spectra$. The composition product $A \circ B$ is the symmetric sequence given by
\[ (A \circ B)_n := \Wdge_{\lambda \in P(n)} (\Sigma_n)_+ \smsh_{H(\lambda)} A_k \smsh B_{l_1}^{\smsh k_1} \smsh \dots \smsh B_{l_r}^{\smsh k_r}. \]
Here $H(\lambda)$ acts freely on $\Sigma_n$ by (right) multiplication and on the right-hand side in the following way:
\begin{itemize}
  \item an element of $H(\lambda)$ determines an element of $\Sigma_{k_1} \times \dots \times \Sigma_{k_r}$ by combining the permutations of the various blocks in the wreath products, and hence an element of $\Sigma_k$ where $k = k_1 + \dots + k_r$;
  \item this element of $\Sigma_k$ then acts on the $A_k$ term via the given symmetric sequence structure of $A$;
  \item the element of $\Sigma_{k_1} \times \dots \times \Sigma_{k_r}$ also acts on
  \[ B_{l_1}^{\smsh k_1} \smsh \dots \smsh B_{l_r}^{\smsh k_r} \]
  by permuting the factors;
  \item the given element of $H(\lambda)$ also includes permutations within each block. These are elements of $\Sigma_{l_i}$ and act on the appropriate $B_{l_i}$ via the symmetric sequence structure on $B$;
  \item the overall action of $H(\lambda)$ on $A_k \smsh B_{l_1}^{\smsh k_1} \smsh \dots \smsh B_{l_r}^{\smsh k_r}$ is the combination of each of these individual actions.
\end{itemize}
The $\Sigma_n$-action on $(A \circ B)_n$ is determined by the left regular action on the $(\Sigma_n)_+$ factor in each term of the coproduct.
\end{definition}

\begin{remark}
Our definition is isomorphic to the traditional way of defining the composition product as a coproduct over \emph{ordered} partitions of $n$. The decomposition in terms of unordered partitions reflects our approach to proving the chain rule. As we will see, that approach is essentially to decompose $FG$ into pieces indexed by the unordered partitions of $n$. The derivatives of these pieces will then correspond to the terms of the coproduct in Definition \ref{def:compprod}.
\end{remark}

We are now in position to state the main result of this paper.

\begin{thm}[Chain rule for functors from spectra to spectra] \label{thm:chainrule}
Let $F,G: \spectra \to \spectra$ be homotopy functors and suppose that $F$ preserves filtered homotopy colimits. Then
\[ \der_*(FG)(X) \homeq \der_*F(GX) \circ \der_*G(X). \]
\end{thm}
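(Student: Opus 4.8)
The plan is to reduce the theorem, in two formal steps, to an explicit computation for homogeneous $F$, and then to bootstrap to the general case by inducting up the Taylor tower of $F$.

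\textbf{Step 1: reduction to the base object $\basept$.} Since $\spectra$ is stable, a fibre sequence of spectra that admits a section splits. Applied to $G_X(Z)\to G(Z\wdge X)\to G(X)$, with section induced by $X\to Z\wdge X$, this gives a natural equivalence $G(Z\wdge X)\homeq G_X(Z)\wdge G(X)$ under which the structure map becomes the projection onto $G(X)$. Substituting into the definition of $(FG)_X$ produces a natural equivalence of functors $\spectra\to\spectra$,
\[ (FG)_X \homeq F_{GX}\circ G_X . \]
Both $G_X$ and $F_{GX}$ are reduced, and $F_{GX}$ is finitary whenever $F$ is (finite coproducts and homotopy fibres commute with filtered homotopy colimits). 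As $\der_nF(X)=\der_n(F_X)$, $\der_nG(X)=\der_n(G_X)$ and $\der_nF(GX)=\der_n(F_{GX})$ by definition, it suffices to prove the theorem at the base object $\basept$ for reduced $F,G$ with $F$ finitary.

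\textbf{Step 2: reduction to polynomial functors.} By the fundamental results of \S6 (Proposition \ref{prop:fundamental} and its consequences), for finitary $F$ one has $P_n(FG)\homeq P_n\bigl((P_nF)(P_nG)\bigr)$: applying $F$ to the fibre of $G\to P_nG$, and then the fibre of $F\to P_nF$, produces functors with trivial $n$-excisive approximation, and this is exactly where the finitary hypothesis is indispensable (compare the counterexample \ref{ex:counterexample}). Since $\der_n$ depends only on $P_n$, we may assume $F$ and $G$ are $n$-excisive.

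\textbf{Step 3: the comparison map, the homogeneous case, and induction.} Following Arone's suggestion, I would use the diagonal maps and co-cross-effects of McCarthy to build a natural transformation $\Delta$ (Definition \ref{def:delta}) comparing $FG$ with a functor $\Phi$ assembled, one summand for each unordered partition $\lambda$ of $n$, from the homogeneous pieces $\bigl(\der_kF\smsh(D_{l_1}G)^{\smsh k_1}\smsh\cdots\smsh(D_{l_r}G)^{\smsh k_r}\bigr)_{hH(\lambda)}$, chosen so that $D_n\Phi$ literally computes the $n$-th term of $\der_*F\circ\der_*G$ as in Definition \ref{def:compprod}. The theorem then amounts to showing that $\Delta$ is a $D_n$-equivalence for every $n$ (Proposition \ref{prop:equiv}). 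When $F$ is homogeneous of degree $m$ this is a direct calculation: $FG(Y)=(\der_mF\smsh G(Y)^{\smsh m})_{h\Sigma_m}$, the functor $\der_n$ commutes with the homotopy-orbit construction, and $\der_*(G^{\smsh m})$ is the $m$-fold smash (Day-convolution) power of $\der_*G$ with $\Sigma_m$ permuting the factors — a clean Leibniz-type identity, because the multilinearized cross-effects of a smash of one-variable functors factor as a smash — and matching with Definition \ref{def:compprod} gives the homogeneous chain rule, along with the explicit formula for $P_n(FG)$ of Theorem \ref{thm:hom}. For a general $n$-excisive $F$ one inducts along its finite Taylor tower: precomposition $(-)\circ G$ is exact, so the fibre sequences $D_mF\to P_mF\to P_{m-1}F$ stay fibre sequences after composing with $G$, the map $\Delta$ is compatible with them, and since $D_n$ preserves fibre sequences a five-lemma argument propagates the equivalence from $D_mF$ and $P_{m-1}F$ to $P_mF$ — so no splitting of the layers of $FG$ is ever required.

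\textbf{Main obstacle.} The crux is Step 3: making the definition of $\Delta$ precise and proving the homogeneous case, i.e.\ verifying that after applying $D_n$ exactly the partition-indexed Fa\`a di Bruno terms survive the cross-effect expansion of $F\bigl(G(Y_1)\wdge\cdots\wdge G(Y_n)\bigr)$ and reassemble with the correct $H(\lambda)$-equivariance into the composition product. The two reductions and the inductive step for general $F$ are comparatively formal.
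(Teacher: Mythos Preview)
Your proposal is correct and follows essentially the same architecture as the paper: the reduction to $X=\basept$ and reduced $F,G$ via $(FG)_X\homeq F_{GX}\circ G_X$ is exactly Remark~\ref{rem:reduction}; the comparison map $\Delta$ built from diagonals and co-cross-effects, the homogeneous case, and the induction up the Taylor tower of $F$ using Proposition~\ref{prop:fundamental} together with a fibre-sequence/five-lemma argument are precisely Propositions~\ref{prop:equiv_hom} and~\ref{prop:equiv}. The one point where the paper is more concrete than your sketch is the homogeneous case: rather than invoking a Leibniz/Day-convolution identity for $\der_*(G^{\smsh k})$, the paper builds an explicit candidate $p_n(FG)=\holim_{\Pi_k(n)}L(P_{r_1}G,\dots,P_{r_k}G)$ for the Taylor tower of $L(G,\dots,G)$, identifies its layers as the expected products (Proposition~\ref{prop:d_n(FG)}), and then checks that $\Delta$ matches this description termwise---this is where the bookkeeping with $H(\lambda)$ actually gets verified.
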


\begin{example}
We can quickly verify Theorem \ref{thm:chainrule} in the case that either $F$ or $G$ is the identity functor $I$ on $\spectra$. In Example \ref{ex:identity} we showed that the derivatives of $I$ (at an arbitrary base object) form the unit for the monoidal product $\circ$. We therefore deduce that
\[ \der_*(FI)(X) \homeq \der_*F(X) \circ \der_*I(X)\]
and
\[ \der_*(IG)(X) \homeq \der_*I(GX) \circ \der_*G(X). \]
\end{example}

\begin{remark} \label{rem:firstderivatives}
For first derivatives, our result reduces to
\[ \der_1(FG)(X) \homeq \der_1F(GX) \smsh \der_1G(X) \]
which has a similar form to the chain rule formula of Klein and Rognes \cite{klein/rognes:2002}. Their result concerned functors of topological spaces in which case (as we noted in Remark \ref{rem:spaces}) the concept of derivative at a general base object is more complicated. In the case of spectra, we get this simpler formula.
\end{remark}

The following example from Kuhn \cite[7.4]{kuhn:2007} shows that the condition that $F$ be finitary in Theorem \ref{thm:chainrule} is essential.

\begin{example} \label{ex:counterexample}
Let $F(X) = L_E(X)$ be the Bousfield localization with respect to a generalized homology theory $E$, and let $G(X) = (X \smsh X)_{h\Sigma_2}$. As in Remark \ref{rem:firstderivatives}, Theorem \ref{thm:chainrule} says that for first derivatives (at $*$) we would expect
\[ \der_1(FG) \homeq \der_1(F) \smsh \der_1(G). \]
Now $\der_1(G) \homeq *$, so the right-hand side of this is trivial. But $\der_1(FG) \homeq P_1(FG)(S)$ and Kuhn points out that
\[ P_1(FG)(S) \homeq \hocofib(L_E(S) \smsh \mathbb{R}P^\infty \to L_E(\mathbb{R}P^\infty)). \]
If $L_E$ is a non-smashing localization, this need not be trivial. For example, if $E$ is mod $2$ K-theory, then $P_1(FG)(S) \neq *$. In this case then Theorem \ref{thm:chainrule} does not hold. Note that if $F$ were finitary, then $L_E$ would be a smashing localization and $P_1(FG)(S)$ would indeed be trivial.
\end{example}

One way to understand where the finitary condition comes in is via the following lemma, which in some sense underpins the entire paper. It will be used later to prove the important results Corollary \ref{cor:p_n(FG)} and Proposition \ref{prop:fundamental}. Notice that it is precisely this lemma that fails to hold in Example \ref{ex:counterexample}.

\begin{lemma} \label{lem:reduced}
Let $F:\spectra \to \spectra$ be a \emph{finitary} linear homotopy functor, and let $G:\spectra \to \spectra$ be an $r$-reduced homotopy functor (that is $P_{r-1}G \homeq *$). Then the composite $FG$ is also $r$-reduced (that is, $P_{r-1}(FG) \homeq *$).
\end{lemma}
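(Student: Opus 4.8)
The plan is to prove the slightly stronger statement that a finitary linear functor $F:\spectra\to\spectra$ commutes, up to natural equivalence, with the functor $P_{r-1}$. The Lemma then follows immediately: since $F(\basept)\homeq\basept$ (a linear functor being in particular reduced) we get $P_{r-1}(FG)\homeq F(P_{r-1}G)\homeq F(\basept)\homeq\basept$.

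Recall from Definition \ref{def:T_n} that $P_{r-1}$ is built as the sequential homotopy colimit
\[ P_{r-1}H \homeq \hocolim\left( H \to T_{r-1}H \to T_{r-1}^2H \to \cdots \right), \]
where $T_{r-1}H(Y)$ is a \emph{finite} homotopy limit of a diagram formed from the spectra $H(Y * U)$, for $U$ ranging over the nonempty subsets of $\{0,1,\dots,r-1\}$. The first step is to observe that $F$ preserves all finite homotopy limits of spectra: being linear it takes homotopy cartesian squares to homotopy cartesian squares (these coincide with the homotopy cocartesian squares in $\spectra$, by Remark \ref{rem:stable}), and being reduced it preserves the terminal object; every finite homotopy limit is built from these. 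Since $T_{r-1}H(Y) = \holim_U H(Y*U)$ and $F$ passes inside this finite limit, there is a natural equivalence $F\circ T_{r-1}H \homeq T_{r-1}(F\circ H)$ for any functor $H$, and hence, iterating, $F\circ T_{r-1}^kH \homeq T_{r-1}^k(F\circ H)$ for all $k$, compatibly with the maps of the tower.

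The second step is to apply $F$ to the homotopy colimit above, and this is the unique point at which the finitary hypothesis is used: since $F$ commutes with sequential homotopy colimits,
\[ F(P_{r-1}G) \homeq F\left(\hocolim_k T_{r-1}^kG\right) \homeq \hocolim_k F\left(T_{r-1}^kG\right) \homeq \hocolim_k T_{r-1}^k(FG) \homeq P_{r-1}(FG), \]
which is the claimed equivalence $P_{r-1}(FG)\homeq F(P_{r-1}G)$. I do not anticipate a real obstacle here: the only input needing care is the assertion that a reduced linear functor of spectra preserves finite homotopy limits, which is a formal consequence of stability, together with the bookkeeping that $F$ passes through $T_{r-1}$ and its iterates naturally. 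It is worth emphasising that finitariness is used exactly once — to move $F$ inside the homotopy colimit defining $P_{r-1}$ — and that this is precisely the step that fails for the non-smashing localization $F=L_E$ in Example \ref{ex:counterexample}, which is linear and reduced but not finitary.
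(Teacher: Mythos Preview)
Your proposal is correct and follows essentially the same route as the paper: show $F(P_{r-1}G)\homeq P_{r-1}(FG)$ by first arguing that $F$ commutes with the finite homotopy limit defining $T_{r-1}$, then using the finitary hypothesis to pass $F$ through the sequential homotopy colimit. The only cosmetic difference is that the paper justifies the first step by an explicit induction showing that a linear functor preserves homotopy cartesian cubes of all dimensions, whereas you invoke the general principle that a functor preserving the terminal object and homotopy pullbacks preserves all finite homotopy limits; both arguments are valid in a stable model category and amount to the same thing.
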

\begin{proof}
The key is to show that
\[ P_n(FG) \homeq FP_nG \]
for all $n$. The result will then follow by setting $n = r-1$.

First we show that a linear functor of spectra preserves homotopy cartesian cubes of all dimensions. This is true for any linear functor and does not require the finitary hypothesis. We prove it by induction on the dimension $n$ of the cube. A homotopy cartesian $1$-cube is just a map that is an equivalence, so the base case $n = 1$ is true because $F$ is a homotopy functor.

A homotopy cartesian $n$-cube $\mathcal{X}$ can be viewed as a map $\mathcal{X}_0 \to \mathcal{X}_1$ between two $(n-1)$-cubes. Let $\mathcal{Y}$ be the termwise homotopy fibre of this map. Then $\mathcal{Y}$ is a homotopy cartesian $(n-1)$-cube. Since the linear functor $F$ preserves fibre sequences, it follows that $F(\mathcal{Y})$ is the homotopy fibre of the map
\[ F(\mathcal{X}_0) \to F(\mathcal{X}_1) \]
which can also be thought of as the $n$-cube $F(\mathcal{X})$. But by the induction hypothesis, $F(\mathcal{Y})$ is homotopy cartesian, from which it follows that $F(\mathcal{X})$ is also homotopy cartesian. This completes the induction.

It follows from this that $F$ preserves the homotopy limits that appear in the construction of $T_n$ (see Definition \ref{def:T_n}) and so we have
\[ F(T_n^kG) \homeq T_n^k(FG) \]
for all $k$ and $n$.

But now we use the fact that $F$ is finitary, i.e. preserves filtered homotopy colimits, to see that
\[ F(P_nG) = F(\hocolim_k T_n^kG) \homeq \hocolim_k F(T_n^kG) \homeq \hocolim_k T_n^k(FG) = P_n(FG) \]
as claimed.
\end{proof}

We conclude this section by showing that the full statement of Theorem \ref{thm:chainrule} follows from the special case in which $X = *$ and $F$ and $G$ are reduced, that is, $F(*) \homeq *$ and $G(*) \homeq *$.

\begin{remark} \label{rem:reduction}
Recall that
\[ \der_*F(GX) := \der_*(F_{GX}); \quad \der_*G(X) := \der_*(G_X); \quad \der_*(FG)(X) := \der_*((FG)_X) \]
where $F_{GX}$, $G_X$ and $(FG)_X$ are as in Definition \ref{def:derivative}. Note that $F_{GX}$ and $G_X$ are reduced functors. Also observe that
\[ \tag{*} \begin{split} F_{GX}(G_X(Z)) &= F_{GX}(\hofib(G(X \wdge Z) \to GX)) \\ &= \hofib \left[ \begin{diagram} \node{F(\hofib(G(X \wdge Z) \to GX) \wdge GX)} \arrow{s} \\ \node{FGX} \end{diagram} \right]. \end{split} \]
But the inclusion $X \to X \wdge Z$ determines an equivalence
\[ \hofib(G(X \wdge Z) \to GX) \wdge GX \weq G(X \wdge Z) \]
and so (*) is equivalent to
\[ \hofib(FG(X \wdge Z) \to FGX) = (FG)_X(Z). \]
Therefore we have $F_{GX} \circ G_X \homeq (FG)_X$.

The claim of Theorem \ref{thm:chainrule} can then be rewritten as
\[ \der_*(F_{GX} \circ G_X) \homeq \der_*(F_{GX}) \circ \der_*(G_X) \]
which is just the Theorem again applied to the derivatives at $*$ of the reduced functors $F_{GX}$ and $G_X$. For the remainder of the proof then, we can assume that $F$ and $G$ are reduced and consider only the Taylor towers at $*$.
\end{remark}

\section{The map that induces the chain rule equivalence} \label{sec:delta}

We prove Theorem \ref{thm:chainrule} in the reduced case by constructing, for each $n$, a natural transformation from $FG$ to a functor whose \ord{n} derivative is equivalent to the \ord{n} piece of the symmetric sequence $\der_*F \circ \der_*G$. In this section, we will construct this natural transformation and show that the \ord{n} derivative is as claimed. Then in \S\ref{sec:proof} we will prove that our map induces an equivalence on \ord{n} derivatives, and hence deduce Theorem \ref{thm:chainrule}.

To define these maps, we recall the definition of the \emph{co-cross-effect} of a homotopy functor. This is dual to Goodwillie's notion of cross-effect \cite[\S3]{goodwillie:2003} and was considered by McCarthy \cite[1.3]{mccarthy:2001} in studying dual calculus.

\begin{definition}[Co-cross-effects] \label{def:co-cross-effects}
Let $F: \spectra \to \spectra$ be a homotopy functor. The \emph{\ord{r} co-cross-effect} of $F$ is the functor of $r$ variables (i.e. from $\spectra^r$ to $\spectra$) defined by
\[ \creff^r(F)(X_1,\dots,X_r) := \hocofib\left( \hocolim_{J \varsubsetneq \{1,\dots,r\}} F(\prod_{j \in J} X_j) \to F(X_1 \times \dots \times X_r) \right) \]
In the language of \cite{goodwillie:1991} this is the total homotopy \emph{cofibre} of the $r$-cube given by applying $F$ to products of subsets of $\{X_1,\dots,X_r\}$, and is dual to the notion of cross-effect.
\end{definition}

\begin{lemma}
Let $F: \spectra \to \spectra$ be a homotopy functor. Then the \ord{r} co-cross-effect of $F$ is equivalent to the \ord{r} cross-effect, that is:
\[ \creff^r(F)(X_1,\dots,X_r) \homeq \creff_r(F)(X_1,\dots,X_r). \]
\end{lemma}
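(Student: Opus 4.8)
The statement to prove is that for a homotopy functor $F:\spectra \to \spectra$, the $r$\textsuperscript{th} co-cross-effect $\creff^r(F)$ is equivalent to the $r$\textsuperscript{th} cross-effect $\creff_r(F)$. Recall that the cross-effect is the total homotopy \emph{fibre} of the $r$-cube $J \mapsto F(\prod_{j \in J} X_j)$ (indexed by subsets $J \subseteq \{1,\dots,r\}$, with $F$ applied to the product over $J$), while the co-cross-effect as defined in \ref{def:co-cross-effects} is the total homotopy \emph{cofibre} of the same $r$-cube. So the content of the lemma is exactly the statement that, in the stable setting, total homotopy fibre and total homotopy cofibre of a fixed $r$-cube agree (up to a shift which in the stable world is invisible, or more precisely: they agree on the nose up to natural equivalence because $\spectra$ is stable).

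The plan is to reduce to the one-dimensional fact that homotopy fibre and homotopy cofibre of a map of spectra agree (up to the canonical equivalence coming from stability: for a map $f: A \to B$ of spectra, $\hofib(f) \homeq \Omega\,\hocofib(f)$, and since $\Omega$ is an equivalence on $\mathrm{Ho}(\spectra)$ one often suppresses it; what matters here is that the same cube has equivalent total fibre and total cofibre). I would argue by induction on $r$. For $r = 1$, both $\creff^1(F)(X)$ and $\creff_1(F)(X)$ are built from the map $F(*) \to F(X)$ — one as homotopy cofibre, one as homotopy fibre — and these coincide in a stable model category (Remark \ref{rem:stable} records precisely that homotopy pushout and homotopy pullback squares coincide, so fibre and cofibre sequences coincide). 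For the inductive step, view the $r$-cube $\mathcal{X}$ associated to $F$ and the variables $X_1,\dots,X_r$ as a map $\mathcal{X}_0 \to \mathcal{X}_1$ of $(r-1)$-cubes, where $\mathcal{X}_1$ is the subcube of those $J$ containing $r$ and $\mathcal{X}_0$ the subcube of those $J$ not containing $r$. Then the total homotopy cofibre of $\mathcal{X}$ is the total homotopy cofibre of the induced map of total homotopy cofibres $\thocofib(\mathcal{X}_0) \to \thocofib(\mathcal{X}_1)$, and dually the total homotopy fibre of $\mathcal{X}$ is the total homotopy fibre of the induced map of total homotopy fibres $\thofib(\mathcal{X}_0) \to \thofib(\mathcal{X}_1)$. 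By the induction hypothesis $\thocofib(\mathcal{X}_i) \homeq \thofib(\mathcal{X}_i)$ naturally for $i = 0,1$, and by the $r=1$ case the cofibre of a map equals its fibre; assembling these equivalences gives $\creff^r(F) \homeq \creff_r(F)$.

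The only subtlety — and the thing to be careful about rather than a genuine obstacle — is checking that the inductive decomposition of the cube is compatible on the two sides: that the $(r-1)$-subcubes appearing in the cross-effect's recursive description really are the cross-effects (respectively co-cross-effects) of appropriate functors of fewer variables, or at least that they are the total fibres/cofibres of the same $(r-1)$-cubes so that the induction hypothesis applies verbatim. This is a routine matter of matching indexing sets: the subcube $\mathcal{X}_0$ is $J \mapsto F(\prod_{j \in J} X_j)$ for $J \subseteq \{1,\dots,r-1\}$, while $\mathcal{X}_1$ is $J \mapsto F(X_r \times \prod_{j \in J} X_j)$ for $J \subseteq \{1,\dots,r-1\}$; both are $(r-1)$-cubes of the required shape, so the induction goes through. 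One should also note that all the equivalences produced are natural in $F$ and in the $X_i$, which is automatic since every construction used (homotopy fibre, homotopy cofibre, the stability equivalence) is natural. I will simply observe that, since we are working in a stable model category, it suffices to prove the statement for $r = 1$ and then induct, as just sketched, the $r = 1$ case being immediate from the coincidence of fibre and cofibre sequences noted in Remark \ref{rem:stable}.
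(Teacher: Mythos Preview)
Your proof has a genuine gap stemming from a mischaracterization of the cross-effect. You write that the cross-effect is the total homotopy fibre of the $r$-cube $J \mapsto F(\prod_{j \in J} X_j)$ (with inclusion maps), but this is not Goodwillie's definition. The cross-effect is the total homotopy fibre of the cube $S \mapsto F(\Wdge_{j \notin S} X_j)$, whose arrows are the \emph{collapse} maps; even after identifying coproducts with products in $\spectra$, this cube has its arrows pointing in the opposite direction to the cube defining the co-cross-effect. Already for $r=1$ this shows up: $\creff_1(F)(X) = \hofib(F(X) \to F(*))$ while $\creff^1(F)(X) = \hocofib(F(*) \to F(X))$; these are the fibre and cofibre of two \emph{different} maps, not of one and the same map.

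This matters because the bare stable fact $\hofib(f) \homeq \Omega\,\hocofib(f)$ for a single map $f$ is not what is being used---and that shift by $\Omega$ is certainly not ``invisible'' (iterated $r$ times it would leave you off by $\Omega^r$). What makes the lemma true is extra structure: the inclusion $F(*) \to F(X)$ is a \emph{section} of the projection $F(X) \to F(*)$, so both sequences split and one obtains $\hocofib(F(*)\to F(X)) \homeq \hofib(F(X)\to F(*))$ with no shift. The paper's proof uses exactly this splitting in the base case, and in the inductive step combines the recursive formula $\creff_r(F) \homeq \hofib\bigl(\creff_{r-1}(F(-\wdge X_r)) \to \creff_{r-1}(F)\bigr)$ with the dual one for $\creff^r$, again invoking the section/retraction between $-$ and $-\times X_r$ together with the induction hypothesis to match the fibre of one map with the cofibre of its section. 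Your inductive skeleton has the right shape, but without the splitting argument the assertion that ``the cofibre of a map equals its fibre'' is false and the proof does not go through.
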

\begin{proof}
We see this by induction on $r$. The base case is $r = 1$ where we have fiber sequences
\[ F(*) \to F(X) \to \creff^1F(X), \text{ and } \creff_1F(X) \to F(X) \to F(*). \]
But each of these splits and so we have
\[ F(X) \homeq F(*) \wdge \creff^1(F) \homeq F(*) \times \creff_1(F) \]
from which it follows that the composite
\[ \creff_1F(X) \to F(X) \to \creff^1F(X) \]
is an equivalence.

Goodwillie shows in \cite{goodwillie:1991} that the total homotopy fibre that defines the \ord{r} cross-effect can be written as the homotopy fibre of a map between total homotopy fibres of cubes of one dimension less. Explicitly then, we get the following formula:
\[ \creff_r(F)(X_1,\dots,X_r) \homeq \hofib\left( \begin{diagram} \node{\creff_{r-1}(F(- \wdge X_r))(X_1,\dots,X_{r-1})}  \arrow{s} \\ \node{\creff_{r-1}(F)(X_1,\dots,X_{r-1})} \end{diagram} \right) \]
where the first term on the right-hand side is the \ord{r-1} cross-effect of the functor
\[ X \mapsto F(X \wdge X_r). \]
Similarly, for the co-cross-effect, we get
\[ \creff^r(F)(X_1,\dots,X_r) \homeq \hocofib\left( \begin{diagram} \node{\creff_{r-1}F(X_1,\dots,X_r)} \arrow{s} \\ \node{\creff^{r-1}(F(- \times X_r))(X_1,\dots,X_r)} \end{diagram} \right). \]
Since $X \wdge X_r \homeq X \times X_r$ for spectra, we get the desired result by a similar argument to the $r = 1$ case, and by the induction hypothesis, we get the desired result.
\end{proof}

\begin{definition} \label{def:cross-effect-maps}
The natural transformations we use to prove Theorem \ref{thm:chainrule} are based on the following construction:
\[ \Delta_r:  F(X) \arrow{e,t}{\Delta} F(X \times \dots \times X) \to \creff^r(F)(X,\dots,X). \]
The first map here is given by the diagonal map $X \to X \times \dots \times X$. The second map is the natural inclusion of $F(X \times \dots \times X)$ into the homotopy cofibre defining $\creff^r(F)(X,\dots,X)$.

Now let $\lambda$ be an element of $P(n)$, that is, an unordered partition of $n$ into positive integers, and recall that we can express $\lambda$ uniquely by writing $n$ as the sum
\[ n = k_1 l_1 + \dots + k_r l_r \]
where $k_i \geq 1$ and $l_1 < \dots < l_r$. We then define a map
\[ \begin{diagram} \node{\Delta_\lambda: FG} \arrow{e,t}{\Delta_r} \node{\creff^r(F)(G,\dots,G) \to [P_{k_1} \dots P_{k_r}] \creff^r(F)(P_{l_1}G, \dots, P_{l_r}G)} \end{diagram} \]
where the second map is given by applying $P_{k_i}$ in the \ord{i} position of the multivariable functor $\creff^r(F)$, and $P_{l_i}$ to the \ord{i} copy of $G$. For the sake of notation, we write $(F,G)_{\lambda}$ for the target of this map.
\end{definition}

\begin{remark}
The maps $\Delta_r$ form the basis of McCarthy's construction of the dual Taylor tower for functors from spectra to spectra \cite{mccarthy:2001}.
\end{remark}

\begin{definition} \label{def:delta}
Putting together the maps $\Delta_{\lambda}$ for all $\lambda \in P(n)$, we get
\[ \Delta: FG \to \prod_{\lambda \in P(n)} (F,G)_{\lambda}. \]
\end{definition}
In the next two sections, we will show that the map $\Delta$ determines an equivalence on \ord{n} derivatives. Theorem \ref{thm:chainrule} will then follow from the following calculation of the \ord{n} derivative of $(F,G)_{\lambda}$.

\begin{proposition} \label{prop:(FG)_l}
Let $F$ be a finitary homotopy functor and let $G$ be any reduced homotopy functor. Then there is an equivalence
\[ \der_n((F,G)_{\lambda}) \homeq (\Sigma_n)_+ \smsh_{H(\lambda)} \der_k(F) \smsh \der_{l_1}G^{\smsh k_1} \smsh \dots \smsh \der_{l_r}G^{\smsh k_r} \]
where the action of $H(\lambda)$ on the right-hand side is as in the definition of the composition product (Definition \ref{def:compprod}). This equivalence is equivariant with respect to the $\Sigma_n$-actions: the usual derivative action on the left, and the left regular action on the $(\Sigma_n)_+$ term on the right.
\end{proposition}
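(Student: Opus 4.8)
The plan is to compute $\der_n((F,G)_\lambda)$ by analyzing the functor $(F,G)_\lambda = [P_{k_1}\cdots P_{k_r}]\creff^r(F)(P_{l_1}G,\dots,P_{l_r}G)$ one layer at a time, exploiting the fact that $\der_n$ only sees the $n$-homogeneous part of a functor and that $F$ (hence $\creff^r(F)$ in each variable) is finitary, linear after taking cross-effects in the appropriate sense, and that $G$ is reduced. First I would recall that the $r$th cross-effect $\creff^r(F)$ is an $r$-variable functor that is reduced in each variable and that, because $F$ is finitary, $\creff^r(F)$ is finitary in each variable; moreover its multilinearization $\creff_r(D_rF)$ applied to $(S,\dots,S)$ recovers $\der_rF$ with its $\Sigma_r$-action. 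The key structural input is the behavior of $\der_n$ of a composite and of a multivariable functor evaluated along homogeneous pieces: since $G$ is reduced, $P_{l}G$ has layers $D_1G,\dots,D_lG$, and when we plug these into a multi-reduced functor and then take the $n$th derivative, only the combinations of layers whose total ``degree'' sums to $n$ survive. For the term $(F,G)_\lambda$ with $n = k_1l_1+\cdots+k_rl_r$, the outer $[P_{k_1}\cdots P_{k_r}]$ together with the inner $P_{l_i}G$'s is exactly engineered so that the only surviving contribution to $\der_n$ comes from the ``top'' piece: the $k_i$-homogeneous layer of $\creff^r(F)$ in its $i$th variable, evaluated on the $l_i$-homogeneous layer $D_{l_i}G$ in each of the relevant slots.

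Carrying this out, the main steps in order are: (1) reduce to computing $\der_n$ of the multi-homogeneous functor $(Z_1,\dots,Z_r)\mapsto \creff^r(F)(Z_1,\dots,Z_r)$ evaluated at $Z_i = D_{l_i}G(Y)$, using that $D_n$ commutes with the homotopy limits and colimits building $P_{k_i}$ and with the cross-effect construction, and that $G$ reduced forces the cross-terms (mixing $D_jG$ for $j<l_i$, or using fewer than all $k_i$ copies) to contribute in degree strictly less than $n$ — here Lemma \ref{lem:reduced} and the standard multilinear-layer analysis do the work; (2) identify the $r$-variable multi-homogeneous piece of $\creff^r(F)$ of multidegree $(k_1,\dots,k_r)$ with (an induced spectrum built from) $\der_k(F)$ where $k = k_1+\cdots+k_r$, using the classification of homogeneous multivariable functors of spectra and the fact that $\creff_r$ of $F$ is a multilinear functor classified by $\der_rF$ — iterating, the multidegree-$(k_1,\dots,k_r)$ cross-effect layer of $\creff^r(F)$ is $\der_kF$ with the $\Sigma_{k_1}\times\cdots\times\Sigma_{k_r}\le\Sigma_k$ action restricted from the derivative action; (3) evaluate on $Y^{\smsh l_i}$-type inputs, smashing in $\der_{l_i}G^{\smsh k_i}$ with its internal $\Sigma_{l_i}\wr\Sigma_{k_i}$ action, and assemble the total group $H(\lambda) = \prod_i(\Sigma_{l_i}\wr\Sigma_{k_i})$ action; (4) finally account for the diagonal input $Y^{\smsh n}$ versus the factored input, which produces the induction $(\Sigma_n)_+\smsh_{H(\lambda)}(-)$ — this is the standard ``diagonal smash induces up along the partition stabilizer'' phenomenon, and it is exactly why $\der_n$ of a functor built from a diagonal is an induced representation.

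I expect the main obstacle to be step (2): pinning down precisely how the multidegree-$(k_1,\dots,k_r)$ multi-homogeneous layer of the $r$th cross-effect $\creff^r(F)$ relates to $\der_{k}F$ with the correct restricted $\Sigma_{k_1}\times\cdots\times\Sigma_{k_r}$-action, keeping track of equivariance throughout. The subtlety is that $\creff^r(F)$ is itself defined by a cross-effect (total cofiber) construction, so one is computing derivatives of cross-effects of derivatives, and the various symmetric group actions (the derivative $\Sigma_k$-action on $\der_kF$, the block-permutation actions from the wreath products, the variable-permutation actions on the cross-effect) all have to be matched up with the action described in Definition \ref{def:compprod}. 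I would handle this by an induction on $r$ using the recursive formula for cross-effects recalled just before Definition \ref{def:cross-effect-maps}, at each stage isolating one new variable and one new factor $\Sigma_{l_r}\wr\Sigma_{k_r}$, and checking the equivariance of the splitting maps carefully. The remaining steps — commuting $D_n$ past $P$-constructions, cross-effects, and homotopy colimits, and the vanishing of low-degree cross terms — are routine given the properties of $P_n$, $D_n$ and Lemma \ref{lem:reduced} stated above, though they do require care about which inputs are finite cell spectra so that Proposition \ref{prop:d_nF} applies, or else invoking the finitary hypothesis on $F$.
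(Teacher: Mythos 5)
Your proposal follows essentially the same route as the paper's proof: replace the outer $P_{k_i}$'s by $D_{k_i}$'s and the inner $P_{l_i}G$'s by $D_{l_i}G$'s via degree-counting (the paper's Lemma \ref{lem:excisive}), identify the resulting multi-homogeneous piece of $\creff^r(F)$ with $\der_kF$, smash in the layers of $G$, and collect the $H(\lambda)$-orbits into the induced $\Sigma_n$-spectrum, with the finitary hypothesis entering exactly where you say it does. The step you flag as the main obstacle is handled in the paper by Lemma \ref{lem:crosseffect}, which applies Goodwillie's formula $D_nF(X) \homeq D_1^{(n)}\creff^n(F)(X,\dots,X)_{h\Sigma_n}$ in each variable and collapses the iterated co-cross-effect into the single $k$th co-cross-effect, rather than your proposed induction on $r$ through the recursive cross-effect splitting, but this is a variation in detail rather than in approach.
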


The proof of the proposition depends on the following two lemmas:
\begin{lemma} \label{lem:excisive}
Let $H: \spectra^r \to \spectra$ be a homotopy functor of $r$ variables that is $(k_1,\dots,k_r)$-excisive (that is, $k_i$-excisive in the \ord{i} variable). Let $G_1,\dots,G_r: \spectra \to \spectra$ be a sequence of homotopy functors such that $G_j$ is $l_j$-excisive. Then the composite functor
\[ H(G_1,\dots,G_r) : \spectra \to \spectra \]
is $(k_1 l_1 + \dots + k_r l_r)$-excisive.
\end{lemma}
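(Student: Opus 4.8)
The plan is to verify the cubical definition of excisiveness head‑on. Writing $N = k_1 l_1 + \dots + k_r l_r$, I would take an arbitrary strongly homotopy cocartesian $(N+1)$-cube $\mathcal{Y}\colon\mathcal{P}(S)\to\spectra$ (so $|S| = N+1$) and show that $H(G_1,\dots,G_r)$ sends it to a homotopy cartesian cube. It is harmless to do the bookkeeping in the case where $H$ and the $G_j$ are reduced: in general the only effect is extra wedge summands arising from the constant parts $G_j(\basept)$ and $H(\dots,\basept,\dots)$, which involve strictly fewer of the variables appearing below, hence do not affect the counting argument. Two preliminary facts, both consequences of stability, are then needed: (i) a strongly cocartesian cube in a stable model category satisfies $\mathcal{Y}(U)\homeq\mathcal{Y}(\emptyset)\wdge\Wdge_{s\in U}\hocofib(\mathcal{Y}(\emptyset)\to\mathcal{Y}(\{s\}))$, naturally in $U$ (iterate $B\amalg_A C\homeq\hocofib(A\to B\wdge C)$); and (ii) for a reduced homotopy functor $\Phi$ one has a natural splitting $\Phi(Z_1\wdge\dots\wdge Z_m)\homeq\Wdge_{\emptyset\neq B\subseteq\{1,\dots,m\}}\creff_{|B|}(\Phi)(Z_B)$, under which $\thofib$ of the $m$-cube $U\mapsto\Phi(\Wdge_{\alpha\in U}Z_\alpha)$ is the top summand $\creff_m(\Phi)(Z_1,\dots,Z_m)$.

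From (ii) and the hypotheses I would extract the vanishing statements that make the excisiveness degrees count: if $\Phi$ is $d$-excisive then $\creff_s(\Phi)\homeq\basept$ for every $s>d$. Indeed a $d$-excisive functor carries strongly cocartesian cubes of \emph{every} dimension $\geq d+1$ to cartesian ones (split off one direction and use $\thofib(\basept\to\basept)\homeq\basept$, as in the proof of Lemma \ref{lem:reduced}), and applying this to the strongly cocartesian $s$-cube $V\mapsto\Wdge_{s'\in V}Z_{s'}$ and invoking (ii) forces $\creff_s(\Phi)\homeq\basept$. Run one variable at a time, the same argument shows that the cross-effect of $H$ in its $j$\textsuperscript{th} variable vanishes beyond order $k_j$.

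For the main computation I would expand
\[ H\bigl(G_1(\mathcal{Y}(\emptyset)\wdge\Wdge_{s\in U}C_s),\ \dots,\ G_r(\mathcal{Y}(\emptyset)\wdge\Wdge_{s\in U}C_s)\bigr), \qquad C_s:=\hocofib(\mathcal{Y}(\emptyset)\to\mathcal{Y}(\{s\})), \]
first via (i), then via (ii) applied to each $G_j$ — where only cross-effects of order $\leq l_j$ survive — and finally via (ii) applied to $H$ slot by slot — where only cross-effects of order $\leq k_j$ survive in the $j$\textsuperscript{th} slot. Each resulting wedge summand is a multi-cross-effect of $H$ evaluated on terms of the form $\creff_{|T_{j,t}|}(G_j)(\{C_s\}_{s\in T_{j,t}})$ with $1\le t\le k_j$ and $|T_{j,t}|\le l_j$, so, regarded as a cube in $U$, it depends on $U$ only through whether a fixed set $E\subseteq S$ with $|E|\le\sum_j|B_j|\,l_j\le\sum_j k_j l_j = N$ is contained in $U$. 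Since $|S| = N+1>|E|$, such a cube is constant in at least one direction and hence has contractible $\thofib$. As $\thofib$ commutes with the (finite) wedge over all these summands, $H(G_1,\dots,G_r)\mathcal{Y}$ has contractible total homotopy fibre; since $\mathcal{Y}$ was arbitrary, $H(G_1,\dots,G_r)$ is $N$-excisive.

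The arithmetic at the heart of the argument is trivial — it is just the bound $\sum_j|B_j|\,l_j<N+1$ — and it is exactly there that the excisiveness hypotheses are used up. I expect the genuine work to lie in the second and third paragraphs' bookkeeping: checking that the splitting in (ii) is natural enough (for the partial projections among the $Z_\alpha$) that it may be fed successively through the $r$ slots of $H$ and then matched against (i) to identify each piece of $H(G_1,\dots,G_r)\mathcal{Y}$ with one of the explicit cubes above, and handling the non-reduced case cleanly. An alternative packaging would phrase everything through cross-effects rather than cubes — proving $\creff_{N+1}(H(G_1,\dots,G_r))\homeq\basept$ and then citing the standard equivalence, for spectrum-valued functors, between $N$-excisiveness and vanishing of the $(N+1)$\textsuperscript{st} cross-effect — but the cubical version avoids that last input in favour of the elementary observation that a cube constant in some direction has trivial total fibre.
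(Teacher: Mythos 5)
The decisive step in your argument is claim (i), and that claim is false. In a stable model category a map does not split off its cofibre: for the 1-cube $S \xrightarrow{2} S$ the claimed splitting would give $S \homeq S \wdge S/2$, and for the strongly cocartesian square with initial vertex $A$ and both adjacent vertices $\basept$ the terminal vertex is $\Sigma A$, not $A \wdge \Sigma A \wdge \Sigma A$. A general strongly cocartesian cube is built from $\mathcal{Y}(\emptyset)$ and the cofibres $C_s$ by (iterated) extensions, not wedges, so your expansion of $H(G_1,\dots,G_r)\mathcal{Y}$ is only valid for cubes of the special form $U \mapsto Y_0 \wdge \Wdge_{s\in U} C_s$. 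What your counting argument genuinely proves is therefore that the composite sends such wedge cubes to cartesian ones, i.e.\ that its cross-effects vanish above order $N$ (``degree $\leq N$''), not that it is $N$-excisive. The fallback you mention in the last paragraph does not rescue this: for spectrum-valued functors vanishing of the $(N+1)$\textsuperscript{st} cross-effect is strictly weaker than $N$-excisiveness. For instance the connective cover functor $X \mapsto \tau_{\geq 0}X$ is reduced and satisfies $\tau_{\geq 0}(X \wdge Y) \homeq \tau_{\geq 0}X \wdge \tau_{\geq 0}Y$, so $\creff_2 \homeq \basept$, yet it is not $1$-excisive (apply it to the cofibre sequence $S^{-1} \to \basept \to S^{0}$). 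Only the implication you prove in your second paragraph ($d$-excisive $\Rightarrow$ cross-effects above $d$ vanish) is true; the converse, which your argument needs, fails.

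The paper avoids this issue by never trying to test excisiveness on explicit cubes. It reduces to $r=1$ by Goodwillie's Lemma 6.6 (a $(k_1,\dots,k_r)$-excisive functor composed with the diagonal is $(k_1+\dots+k_r)$-excisive), then inducts up the Taylor tower of $H$ using the fibre sequences $(D_kH)G \to (P_kH)G \to (P_{k-1}H)G$ to reduce to homogeneous $H$, writes a homogeneous functor as $\creff_kH(X,\dots,X)_{h\Sigma_k}$ with $\creff_kH$ multilinear, applies 6.6 again to reduce to $k=1$, and finishes with the one genuinely geometric input: a linear functor of spectra preserves homotopy cartesian $(l+1)$-cubes (the inductive argument in the proof of Lemma \ref{lem:reduced}). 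If you want to keep a cross-effect-flavoured argument you would need some substitute for that structure theory, because the passage from ``all cross-effects above $N$ vanish'' back to ``$N$-excisive'' is exactly the point that cannot be taken for granted.
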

\begin{proof}
Lemma 6.6 of \cite{goodwillie:2003} reduces this lemma to the case $r = 1$, so suppose $H: \spectra \to \spectra$ is $k$-excisive, and $G:\spectra \to \spectra$ is $l$-excisive. We need to show that $HG$ is $kl$-excisive.

By induction on $k$ using the fibre sequences
\[ (D_kH)G \to (P_kH)G \to (P_{k-1}H)G \]
we can reduce to the case where $H$ is homogeneous. We can then write $HG$ as
\[ \creff_kH(G,\dots,G)_{h\Sigma_k} \]
by \cite[3.5]{goodwillie:2003}. Setting $L = \creff_kH$, it is sufficient to show that if $L$ is a multilinear functor of $k$ variables, and $G$ is $l$-excisive, then $L(G,\dots,G)$ is $kl$-excisive. (The homotopy orbits commute with $P_{kl}$ so can be dropped.) Another application of \cite[6.6]{goodwillie:2003} then reduces to the case $k = 1$.

We therefore must show that if $L$ is a linear functor from $\spectra$ to $\spectra$, and $G$ is $l$-excisive, then $LG$ is also $l$-excisive. This, however, follows from the fact that a linear functor preserves homotopy cartesian $(l+1)$-cubes. (See the proof of Lemma \ref{lem:reduced}.)
\end{proof}

\begin{lemma} \label{lem:crosseffect}
Let $F:\spectra \to \spectra$ be a homotopy functor. Then
\[ [D_{k_1} \dots D_{k_r}] \creff^r(F)(X_1,\dots,X_r) \homeq D_1^{(k)} \creff^k(F)(\overbrace{X_1,\dots,X_1}^{k_1},\dots,)_{h\Sigma_{k_1} \times \dots \times \Sigma_{k_r}} \]
where $k = k_1+\dots+k_r$.
\end{lemma}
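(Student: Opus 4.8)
The plan is to strip off the operators $D_{k_1},\dots,D_{k_r}$ one original variable at a time, applying Goodwillie's classification of homogeneous functors in each variable, and then to recognise that the cross-effects created along the way merge into a single $k$-fold cross-effect. Two standard facts from \cite{goodwillie:2003} are the essential inputs. The first is the classification of homogeneous functors: for a reduced homotopy functor $H:\spectra\to\spectra$ the layer $D_k H$ is classified by the symmetric multilinear functor $D_1^{(k)}\creff_k(H)$ obtained from the $k$-th cross-effect by linearising each of its $k$ variables, so that $D_kH(X)\homeq (D_1^{(k)}\creff_k(H)(X,\dots,X))_{h\Sigma_k}$; applied one variable at a time (and using naturality in the remaining variables) the same holds for a multivariable functor that is reduced in each variable. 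The second is that iterated cross-effects combine: the $s$-th cross-effect in the first variable of $\creff^r(F)(-,X_2,\dots,X_r)$ is naturally $\creff^{r+s-1}(F)$ in the evident $r+s-1$ variables. For functors of spectra we may pass freely between $\creff^\bullet$ and $\creff_\bullet$ via the equivalence $\creff^r(F)\homeq\creff_r(F)$ established above, so both facts apply to the co-cross-effects appearing in the statement; and since the base object is $\basept$, the functor $\creff^r(F)$ is reduced in each of its variables, so the classification is available.

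Concretely, I would first apply $D_{k_1}$ in the first variable of $\creff^r(F)$: by the classification this becomes the homotopy $\Sigma_{k_1}$-orbits of $D_1^{(k_1)}$ of the $k_1$-th cross-effect of $\creff^r(F)$ taken in its first variable, evaluated along the diagonal of that first block of $k_1$ variables, and by the second fact this iterated cross-effect is $\creff^{k_1+r-1}(F)$. Because cross-effects, linearisations, and homotopy orbits taken in distinct variables all commute with one another — and homotopy orbits commute with each of these operations, every functor in sight being spectrum-valued, so that the finite homotopy limits defining the cross-effects commute with all homotopy colimits — the same rewriting can be carried out in each of the remaining $r-1$ original variables, in any order. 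The upshot is that $[D_{k_1}\dots D_{k_r}]\creff^r(F)$ is computed by taking the $k_i$-th cross-effect in each original variable $i$, which by repeated use of the second fact assembles into $\creff^k(F)$ with $k=k_1+\dots+k_r$; then applying $D_1^{(k)}$, i.e.\ linearising all $k$ resulting variables; then restricting to the block-diagonal $(X_1,\dots,X_1,\dots,X_r,\dots,X_r)$; and finally passing to homotopy orbits under $\Sigma_{k_1}\times\dots\times\Sigma_{k_r}$, with the factor $\Sigma_{k_i}$ permuting the $k_i$ copies of $X_i$. This is precisely the right-hand side of the claimed equivalence, and it is equivariant for the product of symmetric groups by construction.

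The hard part is the bookkeeping in the merging of cross-effects together with the tracking of the symmetric-group actions: I would need to check that the copy of $\Sigma_{k_i}$ produced by the $i$-th use of the classification acts exactly by permuting the block of $k_i$ new variables created by the $i$-th cross-effect (and not through some twisted action), and that the successive rewritings in different variables genuinely commute — both of which come down to the fact that operations in distinct variables of a multivariable functor commute \cite[6.6]{goodwillie:2003}. Apart from that the argument is formal once the two cited facts are in hand.
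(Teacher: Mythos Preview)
Your proposal is correct and follows essentially the same route as the paper: apply Goodwillie's formula $D_kH(X)\homeq (D_1^{(k)}\creff^k(H)(X,\dots,X))_{h\Sigma_k}$ separately in each of the $r$ variables of $\creff^r(F)$, and then recognise that the resulting iterated (co-)cross-effect $[\creff^{k_1}\dots\creff^{k_r}]\creff^r(F)$ is a single total homotopy cofibre, namely $\creff^k(F)$. The paper's proof is terser about the commutation and equivariance bookkeeping you flag, but the argument is the same.
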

\begin{proof}
This is an slight generalization of Goodwillie's formula that for a general functor $F$ we have an equivalence
\[ D_nF(X) \homeq D_1^{(n)}\creff^n(F)(X,\dots,X)_{h\Sigma_n} \]
where $D_1^{(n)}$ denotes the multilinearization of the $n$-variable functor $\creff^n(F)$. We apply this formula for each of the variables of $\creff^r(F)$ and find that the left-hand side of the statement in the lemma is equivalent to
\[ D_1^{(k)}([\creff^{k_1} \dots \creff^{k_r}] \creff^r(F))(\overbrace{X_1,\dots,X_1}^{k_1},\dots,\overbrace{X_r,\dots,X_r}^{k_r})_{h\Sigma_{k_1} \times \dots \times \Sigma_{k_r}} \]
where we have taken the \ord{k_i} co-cross-effect in the \ord{i} variable of $\creff^r(F)$ separately and linearized with respect to each of the resulting $k$ variables. The iterated co-cross-effect is an iterated total homotopy cofibre which is equivalent to a single large total homotopy cofibre, namely the \ord{k} co-cross-effect of $F$. This gives the result.
\end{proof}

\begin{proof}[Proof of Proposition \ref{prop:(FG)_l}]
Recall that $(F,G)_{\lambda}$ is the target of the map $\Delta_{\lambda}$ of Definition \ref{def:cross-effect-maps}, that is,
\[ (F,G)_{\lambda} := [P_{k_1} \dots P_{k_r}]\creff^r(F)(P_{l_1}G,\dots,P_{l_r}G). \]
Notice that by Lemma \ref{lem:excisive}, $(F,G)_{\lambda}$ is $n = (k_1l_1+\dots+k_rl_r)$-excisive.

Now the map
\[ [D_{k_1} P_{k_2} \dots P_{k_r}] \creff^r(F)(P_{l_1}G,\dots,P_{l_r}G) \to [P_{k_1} P_{k_2} \dots P_{k_r}] \creff^r(F)(P_{l_1}G,\dots,P_{l_r}G) \]
is an equivalence on \ord{n} derivatives since it fits in a fibre sequence with
\[ [P_{k_1-1} P_{k_2} \dots P_{k_r}] \creff^r(F)(P_{l_1}G,\dots,P_{l_r}G) \]
which is $(n - l_1)$-excisive (again by Lemma \ref{lem:excisive}). Repeating for $k_2$ and so on, we see that the map
\[ \tag{*} [D_{k_1} \dots D_{k_r}] \creff^r(F)(P_{l_1}G,\dots,P_{l_r}G) \to (F,G)_{\lambda} \]
is an equivalence on \ord{n} derivatives.

By Lemma \ref{lem:crosseffect}, the left-hand side of (*) is equivalent to
\[ D_1^{(k)} \creff^k(F)(P_{l_1}G,\dots,P_{l_1}G,\dots,P_{l_r}G,\dots,P_{l_r}G)_{h\Sigma_{k_1} \times \dots \times \Sigma_{k_r}}. \]
Now when we replace $P_{l_1}G$ with the terms from
\[ D_{l_1}G \to P_{l_1}G \to P_{l_1-1}G \]
we get a fibre sequence (since linear functors of spectra preserve fibre sequences). The term in this sequence involving $P_{l_1-1}G$ is $n-1$-excisive by Lemma \ref{lem:excisive}, and so the map $D_{l_1}G \to P_{l_1}G$ induces an equivalence on \ord{n} derivatives. By the same argument, we can replace each $P_{l_j}G$ with $D_{l_j}G$.

We have therefore concluded that the \ord{n} derivative of $(F,G)_{\lambda}$ is equivalent to that of
\[ D_1^{(k)} \creff^k(F)(D_{l_1}G,\dots,D_{l_r}G)_{h\Sigma_{k_1} \times \dots \times \Sigma_{k_r}}. \]
We can now use the usual formulas for $D_1^{(k)} \creff^k(F)$ and $D_{l_i}G$ in terms of the derivatives. We see that the above functor is equivalent to
\[ (\der_kF \smsh (\der_{l_1}G \smsh_{h\Sigma_{l_1}} X^{\smsh l_1})^{\smsh k_1} \smsh \dots \smsh (\der_{l_r}G \smsh_{h\Sigma_{l_r}} X^{\smsh l_r})^{\smsh k_r})_{h\Sigma_{k_1} \times \dots \times \Sigma_{k_r}} \]
for finite $X$. This uses the fact that $F$ is finitary since we are applying the usual derivative formula for $D_1^{(k)}\creff^k(F)$ to inputs that are not necessarily finite spectra.

Taking smash products preserves homotopy orbits, so we can write this as
\[ \left[(\der_kF \smsh \der_{l_1}G^{\smsh k_1} \smsh \dots \smsh \der_{l_r}G^{\smsh k_r} \smsh X^{\smsh n})_{h\Sigma_{l_1}^{k_1} \times \dots \times \Sigma_{l_r}^{kr}} \right]_{h\Sigma_{k_1} \times \dots \times \Sigma_{k_r}}. \]
We can collect together these homotopy orbit constructions and write this instead as
\[ (\der_kF \smsh \der_{l_1}G^{\smsh k_1} \smsh \dots \smsh \der_{l_r}G^{\smsh k_r} \smsh X^{\smsh n})_{hH(\lambda)} \]
where we recall that $H(\lambda)$ is the subgroup $(\Sigma_{l_1} \wr \Sigma_{k_1} \times \dots \times \Sigma_{l_r} \wr \Sigma_{k_r})$ of $\Sigma_n$. The \ord{n} derivative of this is then
\[ (\Sigma_n)_+ \smsh_{H(\lambda)} \der_k(F) \smsh \der_{l_1}G^{\smsh k_1} \smsh \dots \smsh \der_{l_r}G^{\smsh k_r} \]
which completes the proof.
\end{proof}

\section{The Taylor tower of FG when F is homogeneous} \label{sec:homo}

The remainder of the proof of Theorem \ref{thm:chainrule} involves showing that the map $\Delta$ of Definition \ref{def:delta} is an equivalence on \ord{n} derivatives. Our method is to prove it first in the case that $F$ is $k$-homogeneous for some $k$ and then use induction on the Taylor tower to prove it for all $F$. Those proofs will occupy \S\ref{sec:proof}. In this section, we set the scene for the proof when $F$ is homogeneous by describing the full Taylor tower of $FG$ in that case. 

Goodwillie shows in \cite[\S3]{goodwillie:2003} that any $k$-homogeneous functor $F: \spectra \to \spectra$ is of the form
\[ F(X) \homeq L(X,\dots,X)_{h\Sigma_k} \]
where $L$ is a symmetric multilinear functor of $k$ variables. Since $P_n$ commutes with the homotopy orbit construction (for spectrum-valued functors), these will be easy to deal with separately. We therefore start by considering the Taylor tower of $FG$ where $F$ is of the form
\[ F(X) \homeq L(X,\dots,X) \]
and $L: \spectra^k \to \spectra$ is multilinear.

We should stress that as with most of this results in this paper, it is essential to assume that $F$, and hence $L$, is finitary. Example \ref{ex:counterexample} provides a counterexample to the results of this section when that condition is dropped.

We start by defining functors that will turn out to form the Taylor tower of $FG$.

\begin{definition} \label{def:p_n(FG)}
Let $F$ be a homotopy functor of the form
\[ F(X) := L(X,\dots,X) \]
where $L: \spectra^k \to \spectra$ is finitary and multilinear, and let $G: \spectra \to \spectra$ be a reduced homotopy functor.

Let $\Pi_k(n)$ denote the category whose objects are \emph{ordered} $k$-tuples $(r_1,\dots,r_k)$ of positive integers with the property that $r_1 + \dots + r_k \leq n$ and such that there is a unique morphism from $(r_1,\dots,r_k)$ to $(s_1,\dots,s_k)$ if and only if $r_i \geq s_i$ for all $i$.

We define a diagram $\mathcal{X}_n$ of spectra indexed by $\Pi_k(n)$ as follows:
\[ \mathcal{X}_n(r_1,\dots,r_k) := L(P_{r_1}G,\dots,P_{r_k}G) \]
and the morphisms in the diagram come from the structure maps in the Taylor tower for $G$.

We then define functors $p_n(FG): \spectra \to \spectra$ by taking the homotopy limit of the diagrams $\mathcal{X}_n$:
\[ p_n(FG)(X) := \holim_{\Pi_k(n)}\mathcal{X}_n = \holim_{(r_1,\dots,r_k)} L(P_{r_1}G(X),\dots,P_{r_k}G(X)). \]
If $\iota: \Pi_k(n-1) \to \Pi_k(n)$ denotes the obvious inclusion, then there is a natural map
\[ \iota^*\mathcal{X}_n \to \mathcal{X}_{n-1} \]
which determines a restriction map
\[ p_n(FG) \to p_{n-1}(FG). \]
The natural maps $FG = L(G,\dots,G) \to L(P_{r_1}G,\dots,P_{r_k}G)$ then assemble to form maps
\[ FG \to p_n(FG) \]
that commute with the restrictions.
\end{definition}

Our main aim now is to show that the sequence $\{p_n(FG)\}$ is equivalent to the Taylor tower of the functor $FG = L(G,\dots,G)$. The key to this is a calculation of the homotopy fibres of the maps $p_n(FG) \to p_{n-1}(FG)$.

\begin{definition} \label{def:d_n(FG)}
Let $d_n(FG)$ denote the homotopy fibre of the restriction map:
\[ d_n(FG) := \hofib(p_n(FG) \to p_{n-1}(FG)). \]
\end{definition}

\begin{prop} \label{prop:d_n(FG)}
There is a natural equivalence
\[ \alpha: \prod_{r_1+\dots+r_k = n} L(D_{r_1}G,\dots,D_{r_k}G) \weq d_n(FG). \]
\end{prop}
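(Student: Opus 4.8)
The plan is to identify $d_n(FG)$ as the total homotopy fibre of the restriction map $p_n(FG) \to p_{n-1}(FG)$ by analyzing the homotopy limit over $\Pi_k(n)$ directly. The key observation is that $\Pi_k(n)$ is obtained from $\Pi_k(n-1)$ by adjoining precisely those tuples $(r_1,\dots,r_k)$ with $r_1 + \dots + r_k = n$, and each such tuple is a minimal object (no morphisms out of it except to strictly smaller tuples, which lie in $\Pi_k(n-1)$). So the first step is to set up a cofinality/decomposition argument: writing $\mathcal{X}_n$ as the diagram on $\Pi_k(n)$, the homotopy fibre of $\holim_{\Pi_k(n)} \mathcal{X}_n \to \holim_{\Pi_k(n-1)} \iota^*\mathcal{X}_n$ should be expressible in terms of the ``new'' objects. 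Concretely, for a fixed tuple $\rho = (r_1,\dots,r_k)$ with $|\rho| = n$, the contribution is controlled by the homotopy fibre of $\mathcal{X}_n(\rho) = L(P_{r_1}G,\dots,P_{r_k}G)$ mapping to the homotopy limit of $\mathcal{X}_n$ restricted to the tuples strictly below $\rho$.

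The second step is the computation of that local homotopy fibre. Since $L$ is multilinear, it commutes with finite homotopy limits in each variable separately; and the poset of tuples strictly below $\rho$ (within $\Pi_k(n)$) is, roughly, the poset of ways to decrease at least one coordinate. The homotopy fibre of $L(P_{r_1}G,\dots,P_{r_k}G)$ over the homotopy limit of $L(P_{s_1}G,\dots,P_{s_k}G)$ as $(s_1,\dots,s_k)$ ranges over tuples with $s_i \le r_i$ and $s_i < r_i$ for at least one $i$, should — by multilinearity, pulling the homotopy limit inside $L$ one variable at a time — reduce to $L$ applied to the total homotopy fibres $\hofib(P_{r_i}G \to P_{r_i - 1}G) = D_{r_i}G$ in each slot. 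That is, the local fibre at $\rho$ is $L(D_{r_1}G,\dots,D_{r_k}G)$. Summing (i.e. taking the product) over all $\rho$ with $|\rho| = n$ gives the map $\alpha$. I would define $\alpha$ on each factor as the composite $L(D_{r_1}G,\dots,D_{r_k}G) \to L(P_{r_1}G,\dots,P_{r_k}G) = \mathcal{X}_n(\rho)$ followed by the structure maps into the homotopy limit, and check it lands in $d_n(FG)$.

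The third step is to verify $\alpha$ is an equivalence. Here the cleanest route is probably an induction on $n$, or better, a direct cofinality argument: the inclusion of the discrete subcategory $\{\rho : |\rho| = n\}$ together with $\Pi_k(n-1)$ exhibits $\Pi_k(n)$ as a ``pushout-like'' gluing of posets, and the homotopy fibre of the restriction of homotopy limits along such a gluing is the product over the new objects of the relative terms just computed. One must be careful that the relevant undercategories are contractible (so that the local analysis genuinely computes the fibre); this is where properness of the poset $\Pi_k(n)$ and the fact that each new $\rho$ has a well-behaved category of objects below it come in. I expect this cofinality bookkeeping — making precise exactly which homotopy limit the fibre at $\rho$ is taken over, and checking the resulting cube/diagram arguments assemble correctly over all $\rho$ — to be the main obstacle; the multilinearity computation in step two is essentially formal once the indexing is pinned down. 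Naturality of $\alpha$ in $X$ is immediate since every map in sight is natural in $G$, hence in $X$.
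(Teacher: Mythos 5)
Your strategy is sound, but it is genuinely different from the paper's. The paper does not decompose the indexing poset at all: it builds $\alpha$ explicitly by mapping a diagram $\mathcal{D}_n$, concentrated on the sum-$n$ tuples, into $\mathcal{X}_n$ (taking the $P_rG \to P_{r-1}G$ to be fibrations and $D_rG$ their \emph{strict} fibres, so the composite to $p_{n-1}(FG)$ is strictly trivial and the map factors through $d_n(FG)$ on the nose), and then constructs a homotopy inverse $\beta$ by hand: the projection $d_n(FG) \to L(P_{r_1}G,\dots,P_{r_k}G)$ becomes null after composing to $L(P_{r_1}G,\dots,P_{r_k-1}G)$, hence factors through $L(P_{r_1}G,\dots,P_{r_{k-1}}G,D_{r_k}G)$ by linearity in the last slot, and iterating over slots and assembling over tuples gives $\beta$; one then checks both composites are identities. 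Your route instead computes $d_n(FG)$ directly: since the sum-$n$ tuples receive no non-identity morphisms and admit no morphisms among themselves, $p_n(FG)$ is the homotopy pullback of $\prod_{\rho} \mathcal{X}_n(\rho) \to \prod_{\rho} \holim_{\sigma < \rho} \mathcal{X}_n \leftarrow p_{n-1}(FG)$, so $d_n(FG) \homeq \prod_{\rho} \hofib\bigl(\mathcal{X}_n(\rho) \to \holim_{\sigma<\rho}\mathcal{X}_n\bigr)$; cofinality reduces the poset $\{\sigma < \rho\}$ to the punctured cube with $s_i \in \{r_i-1,r_i\}$ (the relevant comma categories have initial objects, hence contractible nerves), and multilinearity identifies each total fibre with $L(D_{r_1}G,\dots,D_{r_k}G)$. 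What each buys: yours avoids manufacturing an inverse in the homotopy category and makes the identification canonical, at the price of the holim-gluing and cofinality lemmas you defer; the paper's is elementary (only fibre sequences and multilinearity) at the price of the somewhat ad hoc $\beta$.

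Two details you should pin down to make your version complete. First, coordinates with $r_i = 1$: the tuple with $s_i = 0$ is not an object of $\Pi_k(n)$, so your punctured cube degenerates in those slots; the computation still gives $D_{r_i}G$ there only because $G$ is reduced, so that $D_1G \homeq P_1G$ --- this is where the reducedness hypothesis enters your argument and it should be said. Second, your definition of $\alpha$ (composing $L(D_{r_1}G,\dots,D_{r_k}G) \to \mathcal{X}_n(\rho)$ with the structure map and ``checking it lands in $d_n(FG)$'') needs an actual nullhomotopy of the further composite to $p_{n-1}(FG)$; the paper arranges this strictly via the fibration/strict-fibre convention, and you should either do the same or verify that the equivalence produced by your pullback decomposition agrees with the map $\alpha$ in the statement. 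Also note the new tuples are characterized by having no morphisms \emph{into} them (they are sources), which is the property your gluing decomposition actually uses; your phrase ``minimal objects'' points at the right objects but the relevant condition is the absence of incoming maps.
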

\begin{proof}
To construct this equivalence, it is convenient to assume that the maps $P_rG \to P_{r-1}G$ in the Taylor tower for $G$ are fibrations, and the layers $D_rG$ are taken to be the \emph{strict} fibres of these maps. This means that the composite
\[ D_rG \to P_rG \to P_{r-1}G \]
is the trivial map.

Now define a diagram $\mathcal{D}_n$ of functors indexed on $\Pi_k(n)$ by
\[ \mathcal{D}_n(r_1,\dots,r_k) := \begin{cases} L(D_{r_1}G,\dots,D_{r_k}G) & \text{if $r_1+\dots+r_k = n$}, \\ * & \text{otherwise}. \end{cases} \]
There is a map of diagrams
\[ \mathcal{D}_n \to \mathcal{X}_n .\]
induced by the natural transformations $D_{r_i}G \to P_{r_i}G$. The induced map of homotopy limits is then a map
\[ \tilde{\alpha}: \prod_{r_1+\dots+r_k = n} L(D_{r_1}G,\dots,D_{r_k}G) \to p_n(FG). \]
Now the composite of $\tilde{\alpha}$ with the restriction $p_n(FG) \to p_{n-1}(FG)$ is the trivial map since it is built from the composites $D_{r_i}G \to P_{r_i}G \to P_{r_i-1}G$. Therefore $\tilde{\alpha}$ factors via the homotopy fibre of that restriction. This gives us the required map
\[ \alpha: \prod_{r_1+\dots+r_k = n} L(D_{r_1}G,\dots,D_{r_k}G) \to d_n(FG). \]

To show that $\alpha$ is an equivalence, we will construct an inverse for it in the homotopy category. Suppose that $r_1+\dots+r_k = n$ and consider the following diagram:
\[ \begin{diagram}
  \node{d_n(FG)} \arrow{e} \arrow{se,b}{\delta} \node{p_n(FG)} \arrow{s} \arrow{e} \node{p_{n-1}(FG)} \arrow{s} \\
  \node[2]{L(P_{r_1}G,\dots,P_{r_k}G)} \arrow{e,t}{\gamma} \node{L(P_{r_1}G,\dots,P_{r_k-1}G)}
\end{diagram} \]
where the vertical maps are projections from the homotopy limits. The square commutes up to homotopy which implies that $\gamma \delta$ is nullhomotopic. Therefore, in the homotopy category, $\delta$ factors via the homotopy fibre of $\gamma$. Since $L$ is linear in each variable, that fibre is equivalent to $L(P_{r_1}G,\dots,P_{r_{k-1}}G,D_{r_k}G)$ and so we have constructed a map
\[ d_n(FG) \to L(P_{r_1}G,\dots,P_{r_{k-1}}G,D_{r_k}G). \]

By a similar argument, the composite
\[ d_n(FG) \to L(P_{r_1}G,\dots,P_{r_{k-1}}G,D_{r_k}G) \to L(P_{r_1}G,\dots,P_{r_{k-1}-1}G,D_{r_k}G) \]
is nullhomotopic, and so factors via $L(P_{r_1}G,\dots,D_{r_{k-1}}G,D_{r_k}G)$, and so on. By induction, we can factor via all the $D_{r_i}G$ and putting the resulting maps together for all $k$-tuples with $r_1+\dots+r_k = n$, we get the required map (in the homotopy category)
\[ \beta: d_n(FG) \to \prod_{r_1+\dots+r_k = n} L(D_{r_1}G,\dots,D_{r_k}G). \]

We now claim that $\beta$ is inverse to $\alpha$ in the homotopy category. The composite $\beta\alpha$ is easily seen from the definitions to be the identity. The composite
\[ \begin{diagram}
  \node{d_n(FG)} \arrow{e,t}{\beta} \node{\prod_{r_1+\dots+r_k = n} L(D_{r_1}G,\dots,D_{r_k}G)} \arrow{e,t}{\alpha} \node{d_n(FG)}
\end{diagram} \]
is the identity because we have constructed $\beta$ in such a way that the following diagram commutes in the homotopy category:
\[ \begin{diagram}
  \node{d_n(FG)} \arrow{e} \arrow{s} \node{p_n(FG)} \arrow{s} \\
  \node{L(D_{r_1}G,\dots,D_{r_k}G)} \arrow{e} \node{L(P_{r_1}G,\dots,P_{r_k}G)}
\end{diagram} \]
This completes the proof.
\end{proof}

\begin{corollary} \label{cor:p_n(FG)}
For each $n$, $d_n(FG)$ is $n$-homogeneous and $p_n(FG)$ is $n$-excisive.
\end{corollary}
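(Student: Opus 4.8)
The plan is to combine Proposition~\ref{prop:d_n(FG)} with Lemma~\ref{lem:excisive} and the standard classification of homogeneous functors; the $n$-excisiveness statements are essentially formal, while the content is the $n$-reducedness of the layers. First I would handle excisiveness. For any object $(r_1,\dots,r_k)$ of $\Pi_k(n)$ we have $r_1+\dots+r_k\le n$, so Lemma~\ref{lem:excisive} (applied with the multilinear, hence $(1,\dots,1)$-excisive, functor $L$ and the $r_i$-excisive functors $P_{r_i}G$) shows that $L(P_{r_1}G,\dots,P_{r_k}G)$ is $(r_1+\dots+r_k)$-excisive, hence $n$-excisive. Since $\Pi_k(n)$ is a finite category and $P_n$ commutes with finite homotopy limits, the homotopy limit $p_n(FG)$ is $n$-excisive as well. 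Consequently $d_n(FG)=\hofib(p_n(FG)\to p_{n-1}(FG))$ is the homotopy fibre of a map between $n$-excisive functors ($p_{n-1}(FG)$ being $(n-1)$-excisive, hence $n$-excisive), and since $P_n$ commutes with homotopy fibres, $d_n(FG)$ is $n$-excisive too. No induction is needed.

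It remains to show that $d_n(FG)$ is actually $n$-homogeneous, i.e. $P_{n-1}d_n(FG)\homeq *$. By Proposition~\ref{prop:d_n(FG)}, and because $P_{n-1}$ commutes with finite products, it suffices to prove that each functor $L(D_{r_1}G,\dots,D_{r_k}G)$ with $r_1+\dots+r_k=n$ is $n$-homogeneous. Here I would avoid applying Lemma~\ref{lem:reduced} one variable at a time, since $D_{r_i}G$ need not be finitary even though $L$ is; instead I would use the structure of homogeneous functors. Writing $D_{r_i}G(X)\homeq M_i(X,\dots,X)_{h\Sigma_{r_i}}$ for a symmetric $r_i$-linear functor $M_i$ (as in \cite[\S3]{goodwillie:2003}), and using that $L$, being finitary and linear in each variable, commutes with homotopy orbits in each variable, we obtain
\[ L(D_{r_1}G,\dots,D_{r_k}G)(X)\homeq L\bigl(M_1(X,\dots,X),\dots,M_k(X,\dots,X)\bigr)_{h(\Sigma_{r_1}\times\dots\times\Sigma_{r_k})}. \]
The functor $(X_1,\dots,X_n)\mapsto L(M_1(X_1,\dots,X_{r_1}),\dots,M_k(X_{n-r_k+1},\dots,X_n))$ is $n$-linear, being a composite of multilinear functors, so its diagonal is $n$-homogeneous. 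Since $P_n$ and $P_{n-1}$ commute with homotopy colimits for spectrum-valued functors, the homotopy orbits on the right-hand side are again $n$-homogeneous, hence so is $L(D_{r_1}G,\dots,D_{r_k}G)$. Taking the (finite) product over all compositions $r_1+\dots+r_k=n$ and invoking Proposition~\ref{prop:d_n(FG)} completes the proof.

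The main obstacle is the middle step: identifying that the genuinely new content is the $n$-reducedness of $d_n(FG)$ and proving it without the finitariness of the $D_{r_i}G$. The homogeneous-decomposition argument above is the cleanest route, but it relies on the standard fact that the diagonal of an $n$-linear functor of spectra is $n$-homogeneous; if a self-contained argument is wanted, one verifies directly that the $n$th cross-effect of such a diagonal, evaluated at the sphere, recovers the functor itself, so that it coincides with its own $D_n$. Everything else is bookkeeping with the closure of $n$-excisive functors under finite homotopy limits and of $n$-homogeneous functors under finite homotopy limits and homotopy colimits.
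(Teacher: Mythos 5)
Your proof is correct, but it deviates from the paper's argument in two places, and your stated reason for the main deviation rests on a misreading. For the $n$-reducedness of the factors $L(D_{r_1}G,\dots,D_{r_k}G)$, the paper simply applies Lemma \ref{lem:reduced} one variable at a time: the finitary hypothesis in that lemma is on the \emph{outer} linear functor (here $L$ in one slot, with the other slots fixed), not on the inner functor, so the fact that $D_{r_i}G$ need not be finitary is irrelevant --- the lemma gives that the $k$-variable functor $(X_1,\dots,X_k)\mapsto L(D_{r_1}G(X_1),\dots,D_{r_k}G(X_k))$ is $r_i$-reduced in the $i$-th variable, and Goodwillie's Lemma 6.10 then handles the diagonal. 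Your detour --- decomposing each $D_{r_i}G$ as $M_i(X,\dots,X)_{h\Sigma_{r_i}}$ via the classification of homogeneous functors, pulling the homotopy orbits through $L$ (which does use that $L$ is finitary, linear and reduced, so preserves homotopy orbits), and invoking that the diagonal of an $n$-multilinear functor is $n$-homogeneous --- is also valid, but it is longer and ultimately leans on the same Goodwillie machinery, so it buys little here; do at least record the justification that a finitary reduced linear functor preserves homotopy orbits, which you assert without proof. On the other hand, your treatment of excisiveness is genuinely cleaner than the paper's: you observe directly that $p_n(FG)$ is a homotopy limit of functors that are $n$-excisive by Lemma \ref{lem:excisive}, and that $n$-excisive functors are closed under homotopy limits and homotopy fibres, which avoids the paper's induction on $n$ through the fibre sequence $d_n(FG)\to p_n(FG)\to p_{n-1}(FG)$ (and, as a bonus, does not need the homogeneity of $d_n(FG)$ as an input to the excisiveness of $p_n(FG)$).
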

\begin{proof}
We know that $L(D_{r_1}G,\dots,D_{r_k}G)$ is $n$-excisive by Lemma \ref{lem:excisive}. By Lemma \ref{lem:reduced}, the functor
\[ (X_1,\dots,X_k) \mapsto L(D_{r_1}G(X_1), \dots, D_{r_k}G(X_k)) \]
is $r_i$-reduced in the \ord{i} variable. Therefore, by Lemma 6.10 of \cite{goodwillie:2003}, the functor
\[ X \mapsto L(D_{r_1}G(X),\dots,D_{r_k}G(X)) \]
is $r_1+\dots+r_k = n$-excisive. Then $d_n(FG)$ is a product of $n$-homogeneous functors, and so it $n$-homogeneous.

Now note that $p_1(FG)$ is only nontrivial if $k = 1$ in which case it is equivalent to $L(D_1G)$ which is $1$-excisive by Lemma \ref{lem:excisive}. So by induction, we may assume that $p_{n-1}(FG)$ is $(n-1)$-excisive, and hence $n$-excisive. The fibration sequence
\[ d_n(FG) \to p_n(FG) \to p_{n-1}(FG) \]
then implies that $p_n(FG)$ is $n$-excisive too.
\end{proof}

The next lemma says that $p_r(FG)$ is the $r$-excisive part of $p_n(FG)$ when $r \leq n$.

\begin{lemma} \label{lem:p_n(FG)}
For $r \leq n$, we have
\[ P_r(p_n(FG)) \homeq p_r(FG). \]
\end{lemma}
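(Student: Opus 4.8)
The plan is to run a downward induction on the tower $\{p_m(FG)\}$, using the fact that the Taylor tower construction $P_r$ annihilates homogeneous functors of degree larger than $r$. From Corollary \ref{cor:p_n(FG)} we know two things: the functor $d_m(FG)$ is $m$-homogeneous for every $m$, and $p_r(FG)$ is $r$-excisive. Combined with the standard property $P_r P_s \homeq P_r$ for $r \leq s$, the $m$-homogeneity of $d_m(FG)$ gives $P_r(d_m(FG)) \homeq P_r(P_{m-1}(d_m(FG))) \homeq P_r(*) \homeq *$ whenever $r \leq m-1$.

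Next, for each $m$ with $r < m \leq n$, I would apply $P_r$ to the fibration sequence
\[ d_m(FG) \to p_m(FG) \to p_{m-1}(FG) \]
of Definition \ref{def:d_n(FG)}. Since all of the functors involved are spectrum-valued, $P_r$ commutes with the homotopy fibre, so we obtain a fibration sequence $P_r(d_m(FG)) \to P_r(p_m(FG)) \to P_r(p_{m-1}(FG))$; as $P_r(d_m(FG)) \homeq *$ by the previous paragraph, this yields an equivalence $P_r(p_m(FG)) \weq P_r(p_{m-1}(FG))$.

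Iterating this equivalence from $m = n$ down to $m = r+1$ gives
\[ P_r(p_n(FG)) \homeq P_r(p_{n-1}(FG)) \homeq \dots \homeq P_r(p_r(FG)), \]
and since $p_r(FG)$ is $r$-excisive the right-hand term is simply $P_r(p_r(FG)) \homeq p_r(FG)$, which is the assertion of the lemma.

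There is no real obstacle here: the substantive input is Corollary \ref{cor:p_n(FG)}, and the rest is bookkeeping. The two points to keep in mind are that $P_r$ commutes with homotopy fibres precisely because we are dealing with spectrum-valued functors, and that the tower we induct over is the one assembled from the layers $d_m(FG)$ — identifying it with the genuine Taylor tower of $FG$ is the task of the results that follow, not of this lemma.
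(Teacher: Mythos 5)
Your proof is correct and is essentially the paper's argument: the paper also inducts along the fibration sequences $d_m(FG) \to p_m(FG) \to p_{m-1}(FG)$, using that $P_r$ kills the $m$-homogeneous layer $d_m(FG)$ for $m > r$ and that $p_r(FG)$ is $r$-excisive, both from Corollary \ref{cor:p_n(FG)}. Your version spells out the vanishing $P_r(d_m(FG)) \homeq *$ and correctly notes that no identification with the genuine Taylor tower of $FG$ is needed, which is exactly the intended logical order.
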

\begin{proof}
We do induction on $n$. The case $n = r$ says that $p_r(FG)$ is $r$-excisive which is Corollary \ref{cor:p_n(FG)}. The induction step then follows from Proposition \ref{prop:p_n(FG)} since we have
\[ P_r(d_n(FG)) \homeq * \]
for $n > r$.
\end{proof}

We are now able to deduce that the functors $p_n(FG)$ form the Taylor tower of $FG$.

\begin{proposition} \label{prop:p_n(FG)}
Let $F$ be a finitary homotopy functor of the form
\[ F(X) = L(X,\dots,X) \]
where $L:\spectra^k \to \spectra$ is multilinear, and let $G: \spectra \to \spectra$ be any reduced homotopy functor. Then the sequence
\[ FG \to \dots \to p_n(FG) \to p_{n-1}(FG) \to \dots \]
is equivalent to the Taylor tower of $FG$. Therefore also, $D_n(FG) \homeq d_n(FG)$.
\end{proposition}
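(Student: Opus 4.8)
The plan is to prove that the natural transformations $FG \to p_n(FG)$ exhibit $p_n(FG)$ as the $n$-excisive approximation $P_n(FG)$; granting this, the remaining assertion is immediate, since
$D_n(FG) = \hofib(P_n(FG)\to P_{n-1}(FG)) \homeq \hofib(p_n(FG)\to p_{n-1}(FG)) = d_n(FG)$.
By Corollary \ref{cor:p_n(FG)} the functor $p_n(FG)$ is $n$-excisive, so the natural map $FG \to p_n(FG)$ factors, uniquely up to homotopy, through a natural transformation $\phi_n\colon P_n(FG) \to p_n(FG)$, and these are compatible with the restriction maps of the two towers. It therefore suffices to show that each $\phi_n$ is a weak equivalence, and I would do this by induction on $n$.

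For $n=0$ the category $\Pi_k(0)$ is empty, so $p_0(FG) \homeq \ast$, while $FG = L(G,\dots,G)$ is reduced (because $L$ is reduced in each variable and $G$ is reduced), whence $P_0(FG) \homeq \ast$ as well, so $\phi_0$ is an equivalence. For the inductive step, assume $\phi_{n-1}$ is an equivalence and take horizontal homotopy fibres in the commutative square
\[ \begin{diagram}
\node{P_n(FG)} \arrow{e} \arrow{s,l}{\phi_n} \node{P_{n-1}(FG)} \arrow{s,r}{\phi_{n-1}} \\
\node{p_n(FG)} \arrow{e} \node{p_{n-1}(FG)}
\end{diagram} \]
to obtain a natural transformation $\psi_n\colon D_n(FG) \to d_n(FG)$. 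Since all the functors in sight take values in spectra, a map of fibre sequences two of whose three components are weak equivalences has the third one too; so it is enough to prove that $\psi_n$ is an equivalence.

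This is the heart of the proof. Both $D_n(FG)$ and $d_n(FG)$ are $n$-homogeneous (the latter by Corollary \ref{cor:p_n(FG)}), so I would conclude by identifying $\psi_n$ with a known equivalence. First, since $d_n(FG)$ is $n$-homogeneous and $p_{n-1}(FG)$ is $(n-1)$-excisive, the fibre sequence $d_n(FG)\to p_n(FG)\to p_{n-1}(FG)$ gives $d_n(FG)\homeq D_n(p_n(FG))$. Now $D_n$ commutes with finite homotopy limits, so applied to $p_n(FG) = \holim_{\Pi_k(n)} L(P_{r_1}G,\dots,P_{r_k}G)$ it yields $\holim_{\Pi_k(n)} D_n L(P_{r_1}G,\dots,P_{r_k}G)$; by Lemma \ref{lem:excisive} the functor $L(P_{r_1}G,\dots,P_{r_k}G)$ is $(r_1+\dots+r_k)$-excisive, so $D_n$ annihilates it when $r_1+\dots+r_k<n$, while when $r_1+\dots+r_k=n$ the fibre sequences $D_{r_i}G\to P_{r_i}G\to P_{r_i-1}G$ (the $P_{r_i-1}G$-terms dropping out of $D_n$ by the same estimate, and preserved by the linear functor $L$) identify $D_n L(P_{r_1}G,\dots,P_{r_k}G)$ with $L(D_{r_1}G,\dots,D_{r_k}G)$. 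Hence $d_n(FG)$ is the homotopy limit over $\Pi_k(n)$ of precisely the diagram $\mathcal{D}_n$ from the proof of Proposition \ref{prop:d_n(FG)}, namely $\prod_{r_1+\dots+r_k=n} L(D_{r_1}G,\dots,D_{r_k}G)$; and the source $D_n(FG) = D_n L(G,\dots,G)$ is identified with the same product by the multivariable-calculus decomposition of the $n$-homogeneous layer of a diagonal \cite{goodwillie:2003} together with the fact --- already exploited in the proof of Lemma \ref{lem:reduced} --- that the finitary linear functor $L$ commutes in each variable with the constructions $D_r$. Under these identifications $\psi_n$ becomes the equivalence $\alpha$ of Proposition \ref{prop:d_n(FG)}, completing the induction. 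I expect the one genuinely delicate point to be the naturality bookkeeping required here: one must check that the various abstract equivalences line up so that the \emph{specific} transformation $\psi_n$, and not merely some equivalence between its source and target, is exhibited as a weak equivalence. Everything else reduces to Corollary \ref{cor:p_n(FG)}, Lemma \ref{lem:excisive}, and Proposition \ref{prop:d_n(FG)}.
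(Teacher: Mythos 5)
Your framework (factor $FG \to p_n(FG)$ through $\phi_n\colon P_n(FG)\to p_n(FG)$, induct on $n$, pass to fibres and reduce to showing $\psi_n\colon D_n(FG)\to d_n(FG)$ is an equivalence) is fine, and your identification of the target, $d_n(FG)\homeq D_n(p_n(FG))\homeq \prod_{r_1+\dots+r_k=n}L(D_{r_1}G,\dots,D_{r_k}G)$, is essentially a repackaging of Proposition \ref{prop:d_n(FG)} and is correct. The gap is in the identification of the \emph{source}. You dispose of $D_n(FG)=D_n(L(G,\dots,G))$ by appealing to a ``multivariable-calculus decomposition of the $n$-homogeneous layer of a diagonal'' from \cite{goodwillie:2003} plus commutation of the finitary multilinear $L$ with $D_r$ in each variable. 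No such decomposition of the layers of a diagonal composite is available in \cite{goodwillie:2003} (what is cited there, 6.6 and 6.10, only gives excisiveness/reducedness estimates), and in the generality needed here that decomposition \emph{is} the statement being proved: $D_n(FG)\homeq d_n(FG)$ is, by Proposition \ref{prop:d_n(FG)}, exactly the assertion that $D_n(L(G,\dots,G))$ splits as that product. So your argument is circular at its central step. The tell is where the finitary hypothesis enters: knowing that $L$ commutes with $P_r$/$D_r$ in each variable (Lemma \ref{lem:reduced}) is not enough; one must also know that the Taylor tower of the diagonal composite does not see the difference between $G$ and $P_nG$, i.e.\ part (2) of Proposition \ref{prop:fundamental}, which is where the hard work of \S\ref{sec:fundamental} lies and which fails without the finitary condition (Example \ref{ex:counterexample}). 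Nothing in your sketch rules out contributions to $D_n(FG)$ invisible to the tower of $G$, which is precisely the phenomenon of that counterexample.

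For comparison, the paper never computes $D_n(FG)$ directly. It shows instead that $P_n(FG)\to P_n(p_n(FG))$ is an equivalence: first when $G$ is $m$-excisive, by a cofinality argument showing $FG\weq p_n(FG)$ for $n\geq mk$ together with Lemma \ref{lem:p_n(FG)} for smaller $n$; then for general $G$ by using Proposition \ref{prop:fundamental}(2) to replace $G$ by $P_nG$. To repair your proof you would need either to reproduce that reduction, or to give an independent proof of the layer decomposition of the diagonal of the multivariable functor $(X_1,\dots,X_k)\mapsto L(G(X_1),\dots,G(X_k))$ — which amounts to the same work. The naturality point you flag (that the abstract equivalences must be checked to realize the specific map $\psi_n$) is a genuine but secondary issue; the unproven source identification is the substantive gap.
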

\begin{proof}
The maps $P_n(FG) \to P_n(p_n(FG)) \leftarrow p_n(FG)$ commute with the tower restriction maps and so it is sufficient to show that these are equivalences. Corollary \ref{cor:p_n(FG)} tells us that the right-hand maps here are equivalences, so it remains to show that $P_n(FG) \weq P_n(p_n(FG))$.

Suppose first that $G$ is $m$-excisive for some $m$. Consider the diagram $\mathcal{X}_n$ of Definition \ref{def:p_n(FG)} whose homotopy limit is $p_n(FG)$, and suppose that $n \geq mk$. If $r_i > m$, then the map
\[ \mathcal{X}_n(r_1,\dots,r_k) \to \mathcal{X}_n(r_1,\dots,r_i-1,\dots,r_k) \]
is an equivalence. It follows that the homotopy limit of $\mathcal{X}_n$ is equivalent to the homotopy limit of its restriction to the subcategory of $\Pi_k(n)$ whose objects are $k$-tuples $(r_1,\dots,r_k)$ with $r_1+\dots+r_k \leq n$ and $r_i \leq m$ for all $i$. This restricted diagram has an initial object $L(P_mG,\dots,P_mG)$ and so it follows that
\[ p_n(FG) \homeq L(P_mG,\dots,P_mG) \homeq L(G,\dots,G) = FG, \]
that is, the map
\[ FG \weq p_n(FG) \]
is an equivalence for $n \geq mk$. It follows that
\[ P_n(FG) \weq P_n(p_n(FG)) \]
for $n \geq mk$. For $n < mk$, we have
\[ P_n(FG) \weq P_n(p_{mk}(FG)) \weq P_n(p_n(FG)) \]
where the second equivalence comes from Lemma \ref{lem:p_n(FG)}.

Now suppose that $G$ is arbitrary. Then we have a commutative diagram
\[\begin{diagram}
  \node{P_n(FG)} \arrow{s} \arrow{e,t}{\sim} \node{P_n(FP_nG)} \arrow{s,r}{\sim} \\
  \node{P_n(p_n(FG))} \arrow{e,t}{\sim} \node{P_n(p_n(FP_nG))}
\end{diagram} \]
We have just shown that the right-hand vertical map is an equivalence. The bottom map is an equivalence since the diagrams $\mathcal{X}_n$ that define $p_n(FG)$ and $p_n(FP_nG)$ are termwise equivalence. Finally, the top map is an equivalence by part (2) of Proposition \ref{prop:fundamental}. Therefore, the left-hand vertical map is an equivalence and we are done.
\end{proof}

We can now take homotopy orbits to get the following explicit formulas for the Taylor tower of $FG$ for any finitary homogeneous functor $F$ and reduced $G$.

\begin{theorem} \label{thm:hom}
Let $F: \spectra \to \spectra$ be a $k$-homogeneous finitary homotopy functor, and let $G: \spectra \to \spectra$ be any reduced homotopy functor. Then we have
\[ P_n(FG) \homeq \left[\holim_{r_1+\dots+r_k \leq n} \der_kF \smsh P_{r_1}G \smsh \dots \smsh P_{r_k}G \right]_{h\Sigma_k} \]
and
\[ D_n(FG) \homeq \left[ \prod_{r_1+\dots+r_k = n} \der_kF \smsh D_{r_1}G \smsh \dots \smsh D_{r_k}G \right]_{h\Sigma_k} \]
where $\Sigma_k$ acts in each case by permuting $(r_1,\dots,r_k)$. The homotopy limit in the formula for $P_n(FG)$ is taken over the category $\Pi_k(n)$.
\end{theorem}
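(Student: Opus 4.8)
The plan is to reduce everything to the results of \S\ref{sec:homo} for functors of the form $L(G,\dots,G)$ with $L$ multilinear, and then take homotopy orbits. First, recall from \cite[\S3]{goodwillie:2003} that a $k$-homogeneous functor $F$ can be written as $F(X) \homeq L(X,\dots,X)_{h\Sigma_k}$ for a symmetric multilinear functor $L:\spectra^k \to \spectra$; since $F$ is finitary, so is $L$. Writing $L(G,\dots,G)$ for the functor $X \mapsto L(GX,\dots,GX)$, we then have $FG \homeq (L(G,\dots,G))_{h\Sigma_k}$. Because $P_n$ and $D_n$ commute with homotopy orbits of spectrum-valued functors, it suffices to compute $P_n(L(G,\dots,G))$ and $D_n(L(G,\dots,G))$ together with their residual $\Sigma_k$-actions, and then apply $(-)_{h\Sigma_k}$.

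Next I would invoke the main results of the section. Proposition \ref{prop:p_n(FG)} identifies $P_n(L(G,\dots,G))$ with $p_n(L(G,\dots,G)) = \holim_{(r_1,\dots,r_k) \in \Pi_k(n)} L(P_{r_1}G,\dots,P_{r_k}G)$, and also gives $D_n(L(G,\dots,G)) \homeq d_n(L(G,\dots,G))$, which by Proposition \ref{prop:d_n(FG)} is equivalent to $\prod_{r_1+\dots+r_k=n} L(D_{r_1}G,\dots,D_{r_k}G)$. It then remains to substitute the standard description of a multilinear functor: any finitary multilinear $L$ satisfies $L(X_1,\dots,X_k) \homeq L(S,\dots,S) \smsh X_1 \smsh \dots \smsh X_k$, and $L(S,\dots,S) \homeq \der_kF$ as a spectrum with $\Sigma_k$-action, the action being precisely the one coming from the symmetry of $L$ that governs the residual $\Sigma_k$-action on $F$. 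Plugging this in gives $L(P_{r_1}G,\dots,P_{r_k}G) \homeq \der_kF \smsh P_{r_1}G \smsh \dots \smsh P_{r_k}G$ and likewise with the $D$'s, yielding the two displayed formulas once homotopy orbits are taken.

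The point requiring the most care is the $\Sigma_k$-equivariance. The residual $\Sigma_k$-action on $F(X) = L(X,\dots,X)_{h\Sigma_k}$ simultaneously permutes the $k$ slots of $L$ and acts on the value of $L$ through its symmetry; applied to $L(P_{r_1}G,\dots,P_{r_k}G)$ this permutes the tuple $(r_1,\dots,r_k)$ (hence reindexes the diagram $\mathcal{X}_n$ on $\Pi_k(n)$ and the product over $\{r_1+\dots+r_k=n\}$) and acts on the $\der_kF$ factor. I would therefore need to check that the equivalences of Propositions \ref{prop:p_n(FG)} and \ref{prop:d_n(FG)}, together with the identification $L(X_1,\dots,X_k) \homeq \der_kF \smsh X_1 \smsh \dots \smsh X_k$, are natural in a way compatible with this action; for Proposition \ref{prop:d_n(FG)} this amounts to checking that the maps $\alpha$ and $\beta$ constructed there respect the symmetry, which is visible from the fact that they are built entirely out of the natural transformations $D_{r_i}G \to P_{r_i}G$. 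Once the equivariance is established, the formulas follow, with $\Sigma_k$ acting on the homotopy limit (resp. the product) by permuting $(r_1,\dots,r_k)$ and on $\der_kF$ as stated.
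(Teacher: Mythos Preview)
Your proposal is correct and follows essentially the same route as the paper: write $F \homeq \creff_k(F)(-,\dots,-)_{h\Sigma_k}$, apply Propositions \ref{prop:p_n(FG)} and \ref{prop:d_n(FG)} to $L = \creff_k(F)$, identify $\creff_k(F)(X_1,\dots,X_k) \homeq \der_kF \smsh X_1 \smsh \dots \smsh X_k$, and then use that $P_n$ and $D_n$ commute with homotopy orbits. Your discussion of the $\Sigma_k$-equivariance is in fact more explicit than the paper's own proof, which passes over this point without comment.
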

\begin{proof}
Applying Propositions \ref{prop:d_n(FG)} and \ref{prop:p_n(FG)} to $L = \creff_k(F)$, and noting that
\[ \creff_k(F)(X_1,\dots,X_r) \homeq \der_kF \smsh X_1 \smsh \dots \smsh X_r \]
we obtain formulas for $P_n(\creff_k(F)(G,\dots,G))$ and $D_n(\creff_k(F)(G,\dots,G))$. We then apply the homotopy orbits construction which commutes with $P_n$ and $D_n$ and gives the claimed formulas since
\[ FG \homeq \creff_k(F)(G,\dots,G)_{h\Sigma_k} \]
when $F$ is $k$-homogeneous.
\end{proof}

\begin{remark}
The only part of this section that does not hold in an arbitrary stable model category is the way we write the formulas in Theorem \ref{thm:hom} using smash products. In general, we can just write them in terms of the \ord{k} cross-effect of $F$, that is:
\[ P_n(FG) \homeq \left[\holim_{r_1+\dots+r_k \leq n} \creff_k(F)(P_{r_1}G,\dots,P_{r_k}G) \right]_{h\Sigma_k} \]
and
\[ D_n(FG) \homeq \left[ \prod_{r_1+\dots+r_k = n} \creff_k(F)(D_{r_1}G,\dots,D_{r_k}G) \right]_{h\Sigma_k}. \]
\end{remark}

\section{Proof of the chain rule} \label{sec:proof}

The aim of this section is to complete the proof of the chain rule by showing that the map $\Delta$ of Definition \ref{def:delta} induces an equivalence on \ord{n} derivatives. We first use the results of \S\ref{sec:homo} to do this in the case that $F$ is homogeneous, and then do it for general $F$ by induction on the Taylor tower.

\begin{proposition} \label{prop:equiv_hom}
Let $F:\spectra \to \spectra$ be a $k$-homogeneous finitary homotopy functor and let $G:\spectra \to \spectra$ be any reduced homotopy functor. Then the map
\[ D_n\Delta: D_n(FG) \weq \prod_{\lambda \in P(n)} D_n((F,G)_{\lambda}) \]
is an equivalence, where $\Delta$ is as in Definition \ref{def:delta}.
\end{proposition}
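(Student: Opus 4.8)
The plan is to compute both sides of the map explicitly --- the source using the results of \S\ref{sec:homo}, the target using Proposition \ref{prop:(FG)_l} --- and then to check that $D_n\Delta$ realizes a ``diagonal'' matching of the resulting summands. Since $D_n$ of any functor is $n$-homogeneous, both $D_n(FG)$ and each $D_n((F,G)_\lambda)$ are $n$-homogeneous; and since $F$ is $k$-homogeneous we have $\der_jF \homeq *$ for $j \neq k$, so Proposition \ref{prop:(FG)_l} shows $D_n((F,G)_\lambda) \homeq *$ unless $\lambda$ has exactly $k$ parts. Writing $Q(n,k) \subseteq P(n)$ for the partitions of $n$ into $k$ parts, it therefore suffices to prove that $\prod_{\lambda \in Q(n,k)} D_n\Delta_\lambda$ is an equivalence onto $\prod_{\lambda \in Q(n,k)} D_n((F,G)_\lambda)$.

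For the source I would use that $F(X) \homeq \creff_k(F)(X,\dots,X)_{h\Sigma_k}$ with $\creff_k(F)$ symmetric and multilinear, so that $FG \homeq \creff_k(F)(G,\dots,G)_{h\Sigma_k}$. Applying Propositions \ref{prop:d_n(FG)} and \ref{prop:p_n(FG)} with $L = \creff_k(F)$, and using that $D_n$ commutes with $(-)_{h\Sigma_k}$, gives
\[ D_n(FG) \homeq \Bigl[\ \textstyle\prod_{r_1 + \dots + r_k = n} \creff_k(F)(D_{r_1}G,\dots,D_{r_k}G)\ \Bigr]_{h\Sigma_k}. \]
The indexing set is a $\Sigma_k$-set whose orbits are naturally indexed by $Q(n,k)$: the orbit of $\lambda$, with distinct parts $l_1 < \dots < l_r$ of multiplicities $k_1,\dots,k_r$ (so $k_1 + \dots + k_r = k$), has stabilizer $\Sigma_{k_1} \times \dots \times \Sigma_{k_r}$. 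Breaking up the homotopy orbits along these orbits, substituting $\creff_k(F)(X_1,\dots,X_k) \homeq \der_kF \smsh X_1 \smsh \dots \smsh X_k$ and $D_{l_i}G(X) \homeq (\der_{l_i}G \smsh X^{\smsh l_i})_{h\Sigma_{l_i}}$, and then collecting the various group actions exactly as in the proof of Proposition \ref{prop:(FG)_l} --- this is the step that produces the subgroup $H(\lambda)$ --- the $\lambda$-orbit contributes the summand
\[ A_\lambda(X) \ :=\ \bigl(\der_kF \smsh \der_{l_1}G^{\smsh k_1} \smsh \dots \smsh \der_{l_r}G^{\smsh k_r} \smsh X^{\smsh n}\bigr)_{hH(\lambda)}. \]
Thus $D_n(FG) \homeq \Wdge_{\lambda \in Q(n,k)} A_\lambda$, and comparison with the proof of Proposition \ref{prop:(FG)_l} shows $A_\lambda \homeq D_n((F,G)_\lambda)$ as functors.

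It remains to match these identifications with the maps $D_n\Delta_\lambda$. Here one uses that $\Delta_\lambda$ factors through $[P_{k_1} \dots P_{k_r}]\creff^r(F)(P_{l_1}G,\dots,P_{l_r}G)$, together with the equivalence $\creff^r(F) \homeq \creff_r(F)$, the explicit description of the cross-effects of a $k$-homogeneous functor (a wedge, indexed by surjections $\{1,\dots,k\} \twoheadrightarrow \{1,\dots,r\}$, of functors built from $\creff_k(F)$ by grouping the $k$ inputs according to the fibres of $\phi$), and the observation that $[P_{k_1} \dots P_{k_r}]$ annihilates every such summand that is not exactly $k_i$-fold in the $i$-th variable (since $\sum d_i = k = \sum k_i$ with all $d_i \geq 1$, survival forces $d_i = k_i$). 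From this, and a slot-by-slot degree count of the kind used in Lemma \ref{lem:excisive}, one sees that $D_n((F,G)_\mu)$ depends on $G$ only through $D_{l_1}G,\dots,D_{l_r}G$, entering in precisely the multilinear expression defining $A_\mu$, and hence that the component of $D_n\Delta_\mu$ into $D_n((F,G)_\mu)$, restricted to the summand $A_\lambda$ of $D_n(FG)$, vanishes unless $\lambda = \mu$ (total degree $n$ on both sides forces the multisets of parts to coincide) and equals the identity when $\lambda = \mu$. Therefore $D_n\Delta$ is, up to equivalence, the identity of $\Wdge_{\lambda \in Q(n,k)} A_\lambda$, and it is $\Sigma_n$-equivariant because every construction above is natural.

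I expect the main obstacle to be precisely this last step: keeping the $\Sigma_k$-orbit and surjection combinatorics coherent on the two sides and verifying the vanishing of the off-diagonal components (equivalently, that $D_n\Delta$ is ``triangular'' with invertible diagonal with respect to the partition indexing), as well as the $\Sigma_n$-equivariance. As with Lemma \ref{lem:excisive} and Proposition \ref{prop:(FG)_l}, this bookkeeping is cleanest to carry out one variable at a time, reducing to the case $k = 1$, which is essentially already contained in \S\ref{sec:homo}.
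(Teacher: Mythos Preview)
Your proposal is correct and follows essentially the same route as the paper: compute $D_n(FG)$ via \S\ref{sec:homo}, compute each $D_n((F,G)_\lambda)$ by expanding the co-cross-effect of a $k$-homogeneous functor as a product over surjections $\{1,\dots,k\} \twoheadrightarrow \{1,\dots,r\}$, observe that $[P_{k_1}\cdots P_{k_r}]$ kills the terms of wrong multidegree, and then match the surviving summands with ordered $k$-tuples summing to $n$.

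The one organizational difference is worth noting. You pass to homotopy $\Sigma_k$-orbits early and then have to argue that the resulting map is ``block diagonal'' with respect to the partition indexing --- hence your concern about off-diagonal components. The paper instead postpones the orbits to the very end: it first proves the statement for $F(X) = L(X,\dots,X)$ with $L$ multilinear (no $\Sigma_k$-quotient), where the indexing on both sides is by \emph{ordered} $k$-tuples, and the map $\Delta_\lambda$ is visibly assembled from the Taylor projections $G \to P_{l_{s(i)}}G$ slot by slot. At that level there is no off-diagonal issue at all --- each ordered tuple $(r_1,\dots,r_k)$ corresponds to exactly one surjection $s$ of the right type, and $D_n\Delta$ is literally the identity on that factor. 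Only afterwards does one take $(-)_{h\Sigma_k}$, which commutes with everything in sight. This ordering makes the ``bookkeeping'' you flag as the main obstacle essentially disappear, so your closing suggestion to reduce to $k=1$ is unnecessary (and not quite the right simplification): the cleaner move is to strip off the $\Sigma_k$-orbits rather than to lower $k$.
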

\begin{proof}
We start by taking $F$ to be of the form $F(X) = L(X,\dots,X)$ for a multilinear functor $L$ of $k$ variables.

Since $L$ is linear in each variable (and since finite products and coproducts are equivalent in $\spectra$), we have
\[ L(X_1 \times \dots \times X_r,\dots,X_1 \times \dots \times X_r) \homeq \prod_{s:\{1,\dots,k\} \to \{1,\dots,r\}} L(X_{s(1)},\dots,X_{s(k)}). \]
where the product is indexed by the set of functions $s$ from $\{1,\dots,k\}$ to $\{1,\dots,r\}$. The co-cross-effect is then given by
\[ \creff^r(F)(X_1,\dots,X_r) \homeq \prod_{s:\{1,\dots,k\} \fib \{1,\dots,r\}} L(X_{s(1)},\dots,X_{s(k)}) \]
where now the product is over all \emph{surjective} functions. Applying $[P_{k_1},\dots,P_{k_r}]$ to this kills all the terms that have more than $k_j$ copies of $X_j$, so we get
\[ [P_{k_1} \dots P_{k_r}] \creff^r(F)(X_1,\dots,X_r) \homeq  \prod_{\substack{s: \{1,\dots,k\} \to \{1,\dots,r\} \\ 1 \leq |s^{-1}(j)| \leq k_j}} L(X_{s(1)},\dots,X_{s(k)}). \]
Now let $\lambda \in P(n)$ be a partition of the positive integer $n$ represented by the decomposition $n = k_1 l_1 + \dots + k_r l_r$. Then we have
\[ (F,G)_{\lambda} = \prod_{\substack{s: \{1,\dots,k\} \to \{1,\dots,r\} \\ 1 \leq |s^{-1}(j)| \leq k_j}} L(P_{l_{s(1)}}G,\dots,P_{l_{s(k)}}G). \]
With respect to this expression the map $\Delta_{\lambda}: FG \to (F,G)_{\lambda}$ of Definition \ref{def:cross-effect-maps} is given by assembling the maps
\[ \tag{*} FG = L(G,\dots,G) \to L(P_{l_{s(1)}}G,\dots,P_{l_{s(k)}}G) \]
where $G \to P_{l_i}G$ comes from the Taylor tower of $G$.

We now want to apply $D_n$ to $\Delta_{\lambda}$. To help simplify this, notice the following facts:
\begin{itemize}
  \item $l_{s(1)}+\dots+l_{s(k)} \leq k_1 l_1 + \dots + k_r l_r = n$ because $|s^{-1}(j)| \leq k_j$, with equality if and only if $|s^{-1}(j)| = k_j$ for $j = 1,\dots,r$;
  \item if $l_{s(1)}+\dots+l_{s(k)} < n$ then $D_n(L(P_{l_{s(1)}}G,\dots,P_{l_{s(k)}}G)) \homeq *$, by Lemma \ref{lem:excisive};
  \item if $l_{s(1)}+\dots+l_{s(k)} = n$ then
  \[ D_n(L(P_{l_{s(1)}}G,\dots,P_{l_{s(k)}}G)) \homeq L(D_{l_{s(1)}}G,\dots,D_{l_{s(k)}}G), \]
  again by Lemma \ref{lem:excisive} using the fibre sequences
  \[ D_{l_{s(i)}}G \to P_{l_{s(i)}}G \to P_{l_{s(i)}-1}G. \]
\end{itemize}
Therefore:
\[ D_n((F,G)_{\lambda}) \homeq \prod_{\substack{s:\{1,\dots,k\} \to \{1,\dots,r\} \\ |s^{-1}(j)| = k_j}} L(D_{l_{s(1)}}G,\dots,D_{l_{s(k)}}G) \]
Now given a function $s:\{1,\dots,k\} \to \{1,\dots,r\}$ as in the indexing set of the product, define
\[ r_i := l_{s(i)}. \]
Then $(r_1,\dots,r_k)$ is an ordered $k$-tuples such that $r_1+\dots+r_k = n$. This $k$-tuple is of \emph{type} $\lambda$ in the sense that precisely $k_j$ of the terms are equal to $l_j$. Conversely, given such a $k$-tuple, define a function $s: \{1,\dots,k\} \to \{1,\dots,r\}$ by
\[ s(i) := j \text{ where $l_j = r_i$}. \]
Such a $j$ exists because $(r_1,\dots,r_k)$ is of type $\lambda$ and is unique because all the $l_j$ are distinct. These constructions set up a 1-1 correspondence between the indexing set of the above product, and the set of ordered $k$-tuples $(r_1,\dots,r_k)$ of `type' $\lambda$ (i.e. $k_i$ of the terms are equal to $l_i$ for $i = 1,\dots,r$). Therefore, we can write
\[ D_n((F,G)_{\lambda}) \homeq \prod_{(r_1,\dots,r_k) \text{ of type $\lambda$}} L(D_{r_1}G,\dots,D_{r_k}G). \]
But now by Proposition \ref{prop:d_n(FG)} we have
\[ D_n(FG) \homeq \prod_{r_1+\dots+r_k = n} L(D_{r_1}G,\dots,D_{r_k}G) \homeq \prod_{\lambda \in P(n)} D_n((F,G)_{\lambda}) \]
and since the maps $\Delta_{\lambda}$ are defined as in (*) in terms of the projections of the Taylor tower of $G$, the map $\Delta$ expresses the above equivalence, as required.

Finally, to deduce the result for a general $F$, we take $L$ to be the \ord{k} cross-effect of $F$ and apply homotopy orbits with respect to the action of $\Sigma_k$ determined by the symmetry isomorphisms for the cross-effect. Taking these homotopy orbits commutes with all the relevant constructions, namely $D_n$, $P_{k_i}$ and the co-cross-effects, so we are done.
\end{proof}

\begin{prop} \label{prop:equiv}
The conclusion of Proposition \ref{prop:equiv_hom} holds for any finitary homotopy functor $F$.
\end{prop}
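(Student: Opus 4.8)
The plan is to induct up the Taylor tower of $F$, feeding Proposition~\ref{prop:equiv_hom} in at each homogeneous layer. The structural fact that makes this possible is that both assignments $F \rightsquigarrow FG$ and $F \rightsquigarrow \prod_{\lambda \in P(n)}(F,G)_{\lambda}$ are \emph{exact}, that is, they carry a fibre sequence of homotopy functors to a pointwise fibre sequence, and $\Delta$ (Definition~\ref{def:delta}) is a natural transformation between them. For $F \rightsquigarrow FG$ this is immediate, as precomposition with $G$ is computed objectwise. For $F \rightsquigarrow (F,G)_{\lambda} = [P_{k_1}\dots P_{k_r}]\creff^r(F)(P_{l_1}G,\dots,P_{l_r}G)$ it uses the stability of $\spectra$: the co-cross-effect $\creff^r(-)$ is a homotopy cofibre built from finite homotopy colimits of values of $F$, and in a stable category such a construction preserves fibre sequences; the operators $P_{k_i}$ preserve homotopy fibres; and evaluation together with finite products is exact. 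Since $D_n$ is also exact, applying $D_n\Delta$ to the fibre sequence $D_kF \to P_kF \to P_{k-1}F$ produces, for every $k$, a map of fibre sequences of functors whose rows become fibre sequences of spectra after evaluation at any object.

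As in Remark~\ref{rem:reduction} we assume $F,G$ reduced and work at $*$, and we fix $n$. We first show, by induction on $j \geq 0$, that $D_n\Delta$ is an equivalence for $P_jF$ (for any finitary $F$). When $j = 0$ the functor $P_0F$ is constant, so $(P_0F)G$ is constant and the co-cross-effects $\creff^r(P_0F)$ vanish; hence both sides of $D_n\Delta$ are trivial. For the inductive step, assume the claim for $P_{j-1}F$. The layer $D_jF$ is $j$-homogeneous, and it is still finitary: in $\spectra$ finite homotopy limits commute with filtered homotopy colimits, so $T_j$, and hence $P_j$ and $D_j$, preserve filtered homotopy colimits. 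Thus Proposition~\ref{prop:equiv_hom} gives that $D_n\Delta$ is an equivalence for $D_jF$. Applying the construction of the first paragraph to the fibre sequence $D_jF \to P_jF \to P_{j-1}F$, the outer two vertical maps are equivalences, so $D_n\Delta$ for $P_jF$, being the middle map in a morphism of fibre sequences of spectra whose other two components are equivalences, is an equivalence as well.

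It remains to deduce the statement for $F$ itself from the case of $P_nF$. By the results on composite Taylor towers in \S\ref{sec:tower} (Proposition~\ref{prop:fundamental}), $D_n(FG)$ depends, up to an equivalence compatible with the tower maps, only on $P_nF$. The same is true of each $D_n((F,G)_{\lambda})$: the functor $(F,G)_{\lambda}$ is assembled from $[P_{k_1}\dots P_{k_r}]\creff^r(F)$, whose homogeneous layers, by Lemma~\ref{lem:crosseffect}, involve only the derivatives $\der_jF$ with $j \leq k_1+\dots+k_r \leq k_1l_1+\dots+k_rl_r = n$, hence only $P_nF$. All these identifications are compatible with $\Delta$, so $D_n\Delta$ for $F$ is identified with $D_n\Delta$ for $P_nF$, an equivalence by the case $j = n$ above. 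This completes the proof.

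The main obstacle is this last reduction: the rest is a clean bootstrap, but the assertion that $D_n(FG)$ and $D_n((F,G)_{\lambda})$ see only $P_nF$ is exactly what the analysis of composite Taylor towers is for, and it is here that the finitary hypothesis on $F$ is essential — without it the analogous statement, and hence the whole argument, breaks down, as in Example~\ref{ex:counterexample}.
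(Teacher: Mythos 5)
Your argument is correct and follows essentially the same route as the paper: reduce to $P_nF$ using part (1) of Proposition \ref{prop:fundamental} on the $FG$ side and the derivative computation for $(F,G)_{\lambda}$ (Proposition \ref{prop:(FG)_l}, which rests on Lemma \ref{lem:crosseffect}) on the other side, then induct up the Taylor tower of $F$, feeding in the homogeneous case (Proposition \ref{prop:equiv_hom}) at each layer via a map of fibre sequences. Your explicit verification that $D_jF$ remains finitary and that both constructions are exact in $F$ simply spells out details the paper leaves implicit, so there is nothing further to add.
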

\begin{proof}
The map $F \to P_nF$ determines a commutative diagram
\[ \begin{diagram}
  \node{D_n(FG)} \arrow{e,t}{D_n\Delta} \arrow{s,l}{\sim} \node{\prod_{\lambda \in P(n)} D_n((F,G)_{\lambda})} \arrow{s,l}{\sim} \\
  \node{D_n((P_nF)G)} \arrow{e,t}{D_n\Delta} \node{\prod_{\lambda \in P(n)} D_n((P_nF,G)_{\lambda})}
\end{diagram} \]
The left-hand vertical map is an equivalence by part (1) of Proposition \ref{prop:fundamental}. The right-hand vertical map is an equivalence by Proposition \ref{prop:(FG)_l} since the map $F \to P_nF$ is an equivalence on \ord{k} derivatives if $k \leq n$. We are therefore reduced to showing that the bottom map is an equivalence.

We do this by induction on the Taylor tower of $F$. The base case is when $F$ is constant in which case both source and target of $\Delta$ are trivial. Consider then the diagram
\[ \begin{diagram}
  \node{D_n((D_kF)G)} \arrow{e,t}{D_n\Delta} \arrow{s} \node{\prod_{\lambda \in P(n)} D_n((D_kF,G)_{\lambda})} \arrow{s} \\
  \node{D_n((P_kF)G)} \arrow{e,t}{D_n\Delta} \arrow{s} \node{\prod_{\lambda \in P(n)} D_n((P_kF,G)_{\lambda})} \arrow{s} \\
  \node{D_n((P_{k-1}F)G)} \arrow{e,t}{D_n\Delta} \node{\prod_{\lambda \in P(n)} D_n((P_{k-1}F,G)_{\lambda})}
\end{diagram} \]
But $D_n\Delta$ is an equivalence for $D_kF$ by Proposition \ref{prop:equiv_hom}, and for $P_{k-1}F$ by the induction hypothesis. Hence it is an equivalence for $P_kF$. This completes the proof.
\end{proof}

Finally, we deduce our chain rule.

\begin{thm:chainrule}
Let $F,G:\spectra \to \spectra$ be homotopy functors and suppose that $F$ preserves filtered homotopy colimits. Then
\[ \der_*(FG)(X) \homeq \der_*(F)(GX) \circ \der_*(G)(X). \]
\end{thm:chainrule}
\begin{proof}
The case where $F$ and $G$ are reduced and $X = *$ follows from Propositions \ref{prop:equiv} and \ref{prop:(FG)_l} using the definition of the composition product (\ref{def:compprod}) and the fact that finite products and coproducts of spectra are equivalent. The general case then follows from the argument of Remark \ref{rem:reduction}.
\end{proof}

\section{Chain rule for Taylor towers of functors of spectra} \label{sec:tower}

We now use the results on derivatives to say something about the full Taylor tower for a composite functor $FG$. Unfortunately, we are unable to produce a simple formula for $P_n(FG)$ in terms of the Taylor towers of $F$ and $G$. Instead our main result gives a recursive way to obtain expressions for the $P_n(FG)$ as homotopy limits of various diagrams. We carry out this process for $n = 1,2,3$. The usefulness (or otherwise) of our approach depends on having good models for the objects and maps in these diagrams.

We again consider the expansion of the Taylor tower of $FG$ about a general base object $X$, but we need to give another word of caution. We are only able to describe $P^X_n(FG)$ as a functor on $\spectra_X$, that is, on maps $Y \to X$ that have a section. Goodwillie's general theory told us that for the layer $D^X_n(FG)$, this was sufficient to determine its value on $\spectra/X$, that is, on all maps $Y \to X$. For $P_n$ this is not the case.

As with the chain rule for derivatives, the first order of business is to reduce to the case where $F$ and $G$ are reduced functors, and $X = *$. The following lemma helps us to do that.

\begin{lemma} \label{lem:P^X_n(FG)}
If $Y \to X$ has a section, then
\[ P^X_nF(Y) \homeq P_n(F_X)(\hofib(Y \to X)) \wdge F(X) \]
where $F_X(Z) = \hofib(F(Z \wdge X) \to FX)$ as in Definition \ref{def:derivative}.
\end{lemma}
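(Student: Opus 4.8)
The goal is to express $P^X_nF(Y)$, for $Y\to X$ with a section, in terms of the reduced functor $F_X$. The plan is to use the section to split $Y$ and then apply the defining property of $F_X$. First I would note that the section $X\to Y$ together with the map $Y\to X$ makes $Y$ an object of $\spectra_X$, and since finite coproducts and products of spectra agree, we have a canonical equivalence $Y\weq \hofib(Y\to X)\wdge X$ in $\spectra$ (this is exactly the equivalence used in the proof of Proposition~\ref{prop:d_nF} and in Remark~\ref{rem:reduction}). Writing $Z := \hofib(Y\to X)$, this gives $F(Y)\homeq F(Z\wdge X)$, and by definition of $F_X$ we get a fibre sequence $F_X(Z)\to F(Z\wdge X)\to F(X)$, which splits (again because fibre and cofibre sequences coincide in $\spectra$), so $F(Y)\homeq F_X(Z)\wdge F(X)$.

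\textbf{Main step.} The heart of the argument is to check that this splitting is compatible with the $P_n$ construction, i.e. that $P^X_n F(Y)\homeq P_n(F_X)(Z)\wdge F(X)$. For this I would argue that the functor $(Y\to X)\mapsto F(X)$ is constant, hence $0$-excisive, and therefore $P^X_n$ of it is itself; and that $P^X_n$ commutes with the wedge (equivalently, since we are spectrum-valued, with finite products), because $P_n$ commutes with all homotopy colimits for spectrum-valued functors. Thus applying $P^X_n$ to the equivalence $F\homeq (Y\mapsto F_X(\hofib(Y\to X)))\wdge(Y\mapsto F(X))$ of functors on $\spectra_X$ yields
\[
P^X_nF(Y)\homeq P^X_n\bigl(Y\mapsto F_X(\hofib(Y\to X))\bigr)(Y)\;\wdge\; F(X).
\]
It then remains to identify $P^X_n\bigl(Y\mapsto F_X(\hofib(Y\to X))\bigr)$ with $P_n(F_X)(\hofib(Y\to X))$. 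This is precisely the statement that $Y\mapsto\hofib(Y\to X)$ commutes with the Taylor tower construction, which is Lemma~\ref{lem:hofib} (it takes homotopy cocartesian squares, and more generally strongly cocartesian cubes, in $\spectra/X$ to homotopy cocartesian cubes in $\spectra$, hence intertwines the $T_n$ and $P_n$ constructions), combined with the observation that the forgetful functor $\spectra_X\to\spectra/X$ preserves and reflects the relevant homotopy (co)limits.

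\textbf{Expected obstacle.} The only subtlety — and the place I would be most careful — is making the compatibility of the wedge decomposition with $P^X_n$ rigorous as an equivalence of functors on $\spectra_X$ rather than merely objectwise: one must check the splitting $F(Z\wdge X)\homeq F_X(Z)\wdge F(X)$ is natural in the object $Y=Z\wdge X$ of $\spectra_X$ (it is, since it is induced by the section and the projection, both of which are part of the data of an object of $\spectra_X$), so that applying $P^X_n$ termwise is legitimate. Everything else is a formal consequence of the stability of $\spectra$ (fibre $=$ cofibre sequences) and the standard fact that $P_n$ for spectrum-valued functors commutes with homotopy colimits and finite homotopy limits.
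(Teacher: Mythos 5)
Your argument is correct and uses essentially the same ingredients as the paper's proof: the section-induced splitting (via stability of $\spectra$), Lemma \ref{lem:hofib} to identify $P^X_n\bigl(Y \mapsto F_X(\hofib(Y \to X))\bigr)$ with $P_n(F_X)(\hofib(Y \to X))$, and the compatibility of $P^X_n$ with fibres/wedges and constant functors. The only cosmetic difference is the order of operations: the paper applies $P^X_n$ first and then splits, using that the section of $Y \to X$ induces a section of $P^X_nF(Y) \to F(X)$, so that $P^X_nF(Y) \homeq \hofib(P^X_nF(Y) \to FX) \wdge FX$, whereas you split $F$ on $\spectra_X$ first and then commute $P^X_n$ past the wedge.
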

\begin{proof}
When $Y \to X$ has a section, so does $P^X_nF(Y) \to P^X_nF(X) = F(X)$ and hence
\[ \tag{*} P^X_nF(Y) \homeq \hofib(P^X_nF(Y) \to FX) \wdge FX. \]
But by Lemma \ref{lem:hofib} we have
\[ \begin{split} P_n(F_X(\hofib(Y \to X))) & \homeq P^X_n(Y \mapsto F_X(\hofib(Y \to X))) \\
                                           & \homeq P^X_n(Y \mapsto \hofib(FY \to FX)) \\
                                           & \homeq \hofib(P^X_nF(Y) \to FX) \end{split} \]
which combined with (*) yields the lemma.
\end{proof}

Along similar lines to Remark \ref{rem:reduction}, we now notice that
\[ \begin{split} P^X_n(FG)(Y) &\homeq P_n((FG)_X)(\hofib(Y \to X)) \wdge FG(X) \\
                              &\homeq P_n(F_{GX}G_X)(\hofib(Y \to X)) \wdge FG(X) \end{split} \]
It is therefore sufficient to study the case where $F$ and $G$ are reduced and $X = *$. Applying this to the functors $F_{GX}$ and $G_X$ will allow us to say something about $P^X_n(FG)$.

The question before us then is to get information about $P_n(FG)$ from the individual Taylor towers of $F$ and $G$. Our main result is the following. Notice that as elsewhere in this paper, we need the condition that $F$ is finitary.

\begin{proposition}
Let $F,G: \spectra \to \spectra$ be reduced homotopy functors with $F$ finitary, and let $(F,G)_{\lambda}$ and the maps $\Delta_{\lambda}: FG \to (F,G)_{\lambda}$ be as in Definition \ref{def:cross-effect-maps}. Then the following diagram is a homotopy pullback:
\[ \begin{diagram}
  \node{P_n(FG)} \arrow{e} \arrow{s,l}{\prod \Delta_{\lambda}} \node{P_{n-1}(FG)} \arrow{s,l}{\prod \Delta_{\lambda}} \\
  \node{\prod_{\lambda \in P(n)} P_n((F,G)_{\lambda})} \arrow{e} \node{\prod_{\lambda \in P(n)} P_{n-1}((F,G)_{\lambda})}
\end{diagram} \]
\end{proposition}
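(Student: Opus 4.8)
The plan is to verify the homotopy pullback property by comparing the horizontal homotopy fibres of the two rows. I would use the standard fact that in a stable setting a commutative square is homotopy cartesian precisely when the map it induces on horizontal (equivalently, vertical) homotopy fibres is an equivalence. (In our situation all objects are spectra, so homotopy pullbacks and pushouts coincide and there are long exact sequences of homotopy groups; a map of horizontal fibre sequences that is an equivalence on the fibres and the identity on the base is then an equivalence on the totals by the five lemma, which is exactly this fact.) The point is that the relevant fibres turn out to be precisely the objects appearing in Proposition \ref{prop:equiv}, so the statement follows with essentially no extra work.

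First I would identify the homotopy fibres of the two horizontal maps. By definition of the layers, $\hofib(P_n(FG) \to P_{n-1}(FG)) = D_n(FG)$. For the bottom row, recall that $P_n$ and $P_{n-1}$ commute with finite homotopy limits for spectrum-valued functors; since $P(n)$ is finite this means that homotopy fibres commute with the product over $\lambda \in P(n)$, so
\[ \hofib\left( \prod_{\lambda \in P(n)} P_n((F,G)_{\lambda}) \to \prod_{\lambda \in P(n)} P_{n-1}((F,G)_{\lambda}) \right) \homeq \prod_{\lambda \in P(n)} D_n((F,G)_{\lambda}). \]
Next I would check that the map induced by the square on these horizontal fibres is exactly $D_n\Delta$. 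Indeed, the vertical maps of the square are obtained by applying the functors $P_n$ and $P_{n-1}$ to the natural transformation $\Delta = \prod_{\lambda} \Delta_\lambda : FG \to \prod_{\lambda \in P(n)} (F,G)_{\lambda}$ of Definition \ref{def:delta}; since $D_n = \hofib(P_n \to P_{n-1})$ is itself a functor on natural transformations, applying it to $\Delta$ yields precisely the comparison map $D_n(FG) \to \prod_{\lambda \in P(n)} D_n((F,G)_{\lambda})$ studied in \S\ref{sec:proof}.

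Finally I would invoke Proposition \ref{prop:equiv}: since $F$ is finitary and $F,G$ are reduced, $D_n\Delta$ is an equivalence. Hence the induced map on horizontal homotopy fibres is an equivalence, and by the criterion above the square is a homotopy pullback. I do not expect any real obstacle here: the only substantive ingredient is Proposition \ref{prop:equiv}, and the remaining work is just the bookkeeping of identifying the horizontal fibres and the map between them — which is routine once the commutation of $P_n$ and $D_n$ with finite products is in hand.
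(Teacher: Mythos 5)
Your proposal is correct and follows essentially the same route as the paper: the paper also views the square as the right-hand part of a map of horizontal fibre sequences, identifies the map on fibres as $D_n\Delta: D_n(FG) \to \prod_{\lambda \in P(n)} D_n((F,G)_{\lambda})$, and concludes from Proposition \ref{prop:equiv} that the square is a homotopy pullback. Your extra bookkeeping (fibres commuting with the finite product, the explicit stable-square criterion) is just a spelled-out version of the same argument.
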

\begin{proof}
The above diagram is part of the following map of fibre sequences:
\[ \begin{diagram}
  \node{D_n(FG)} \arrow{e} \arrow{s,l}{\sim} \node{P_n(FG)} \arrow{e} \arrow{s} \node{P_{n-1}(FG)} \arrow{s} \\
  \node{\prod_{\lambda \in P(n)} D_n((F,G)_{\lambda})} \arrow{e} \node{\prod_{\lambda \in P(n)} P_n((F,G)_{\lambda})} \arrow{e} \node{\prod_{\lambda \in P(n)} P_{n-1}((F,G)_{\lambda})}
\end{diagram} \]
The left-hand vertical map is an equivalence by Proposition \ref{prop:equiv} and hence the right-hand square is a homotopy pullback as claimed.
\end{proof}

We now give calculations for small $n$.

\begin{examples}
For $n=1$, we get the following simple expression
\[ P_1(FG) \homeq (P_1F)(P_1G) \]
which is clearly analogous to the chain rule for first derivatives in ordinary calculus.

The case $n = 2$ is also manageable. There are two partitions of $n = 2$, corresponding to $2 = 2$ and $2 = 1 + 1$.
We have $(FG)_2 \homeq (P_1F)(P_2G)$ and $(FG)_{1+1} \homeq (P_2F)(P_1G)$. Each of these has $P_1((F,G)_{\lambda}) \homeq (P_1F)(P_1G)$. Therefore, we get a homotopy pullback
\[ \begin{diagram}
  \node{P_2(FG)} \arrow{e} \arrow{s} \node{(P_1F)(P_1G)} \arrow{s,r}{\Delta} \\
  \node{(P_2F)(P_1G) \times (P_1F)(P_2G)} \arrow{e} \node{(P_1F)(P_1G) \times (P_1F)(P_1G)}
\end{diagram}\]
This is equivalent to the existence of a homotopy pullback of the form
\[ \begin{diagram}
  \node{P_2(FG)} \arrow{e} \arrow{s} \node{(P_2F)(P_1G)} \arrow{s} \\
  \node{(P_1F)(P_2G)} \arrow{e} \node{(P_1F)(P_1G)}
\end{diagram} \]
where the bottom horizontal and right-hand vertical maps are given by $P_2G \to P_1G$ and $P_2F \to P_1F$ respectively.

For $n = 3$, there are three partition types: $3$, $12$ and $111$. This gives a pullback square
\[ \begin{diagram}
  \node{P_3(FG)} \arrow{e} \arrow{s} \node{(P_3F)(P_1G) \times (P_1F)(P_3G) \times (P_1 P_1 \creff^2(F)(P_1G,P_2G))} \arrow{s} \\
  \node{P_2(FG)} \arrow{e} \node{(P_2F)(P_1G) \times (P_1F)(P_2G) \times (P_1 P_1 \creff^2(F)(P_1G,P_1G))}
\end{diagram} \]
We can identify the co-cross-effects in this case as
\[ P_1 P_1 \creff^2(F)(P_1G,P_kG) \homeq \der_2F \smsh P_1G \smsh P_kG \]
The most difficult part of this square is the map
\[ P_2(FG) \to \der_2F \smsh P_1G \smsh P_1G \]
that forms part of the bottom arrow in the above diagram. It can be written as a composite:
\[ P_2(FG) \to P_2F \circ P_1G \to \creff^2(P_2F)(P_1G,P_1G) \homeq \der_2F \smsh P_1G \smsh P_1G. \]
where the second map is essentially $\Delta_2$ (see Definition \ref{def:cross-effect-maps}). Unpacking the square, we can write $P_3(FG)$ as the homotopy limit of the following diagram:
\[ \begin{diagram}
  \node{(P_1F)(P_3G)} \arrow{s} \node{(P_3F)(P_1G)} \arrow{s} \node{\der_2F \smsh P_1G \smsh P_2G} \arrow{s} \\
  \node{(P_1F)(P_2G)} \arrow{se} \node{(P_2F)(P_1G)} \arrow{s} \arrow{e,t}{\Delta_2} \node{\der_2F \smsh P_1G^{\smsh 2}} \\
  \node[2]{(P_1F)(P_1G)}
\end{diagram} \]
\end{examples}

Theoretically, this process could be used to get `formulas' for all $P_n(FG)$ as homotopy limits. In practice, these diagrams seem to quickly get rather complicated and identifying the maps involved depends on having a lot of information about the functor $F$, including models for all the functors $P_{k_1} \dots P_{k_r}\creff^r(F)$ and for the maps (similar to the diagonal maps $\Delta_r$) that go between them.

\section{Fundamental results on Taylor towers of composite functors} \label{sec:fundamental}

The main aim of this section is to prove the following crucial results. They say, effectively, that to study $P_n(FG)$, it is sufficient to know $P_nF$ and $P_nG$.

\begin{proposition} \label{prop:fundamental}
Let $F,G: \spectra \to \spectra$ be simplicial homotopy functors with $G$ reduced. Then:
\begin{enumerate}
  \item the canonical map $P_n(FG) \to P_n((P_nF)G)$ is an equivalence;
  \item if $F$ is finitary, then the canonical map $P_n(FG) \to P_n(F(P_nG))$ is an equivalence.
\end{enumerate}
\end{proposition}

It should be noted that the finitary condition is definitely necessary in part (2) of this proposition. Again Example \ref{ex:counterexample} provides a counterexample if that condition is dropped.

We start with the proof of part (1) of Proposition \ref{prop:fundamental}. For this, we recall the details of Goodwillie's construction of $P_nF$ for a homotopy functor $F$. We quote from Kuhn \cite[\S5]{kuhn:2007} (which generalizes slightly the original definitions of Goodwillie \cite{goodwillie:2003}).

\begin{definition} \label{def:T_n}
Let $X \in \spectra$ and let $U$ be a finite set. We define the \emph{join} of $X$ and $U$ to be the homotopy cofibre of the fold map:
\[ U * X := \hocofib \left(\Wdge_{U} X \to X \right). \]
For a homotopy functor $F:\spectra \to \spectra$, we define a new functor $T_nF: \spectra \to \spectra$ by
\[ T_nF(X) := \holim_{\emptyset \neq U \subset \{0,\dots,n\}} F(U * X). \]
This homotopy limit is taken over the category of nonempty subsets of the set $\{0,\dots,n\}$ whose morphisms are inclusions. There is a natural map
\[ F(X) \to T_nF(X). \]
Goodwillie's $n$-excisive approximation to $F$ is then given by
\[ P_nF(X) := \hocolim(F(X) \to T_nF(X) \to T_n(T_nF)(X) \to \dots). \]
\end{definition}

The key to the proof of part (1) of Proposition \ref{prop:fundamental} is the following construction.

\begin{definition} \label{def:r_n(F,G)}
Suppose $G$ is simplicial and \emph{strictly reduced}, that is $G(*) = *$. We define a map
\[ r_n(F,G) : (T_nF)(GX) \to T_n(FG)(X). \]
According to Definition \ref{def:T_n} this should be a map
\[ \holim_{U} F(U * GX) \to \holim_{U} F(G(U * X)). \]
The definition of $r_n(F,G)$ is then completed by describing a map
\[ U * GX \to G(U * X) \]
that is natural with respect to $X$, $G$ and $U$. Since the join $U * X$ is described as a homotopy cofibre, it is sufficient to construct maps of the form
\[ \hocofib(GX \to GY)) \to G(\hocofib(X \to Y)) \]
or, explicitly,
\[ (GY \wdge_{GX} (I \smsh GX)) \to G(Y \wdge_X (I \smsh X)). \]
where $I$ is the based interval. It is therefore sufficient that there be a natural map
\[ I \smsh GX \to G(I \smsh X). \]
This exists if $G$ is enriched over based simplicial sets, or equivalently, if $G$ is simplicial and $G(*) = *$.
\end{definition}

Since Proposition \ref{prop:fundamental} assumes only that $G$ is (weakly) reduced, i.e. that $G(*) \homeq *$, we note that any reduced functor is equivalent to one that is strictly reduced. The following argument was told to the author by Greg Arone.

\begin{lemma}
Let $G: \spectra \to \spectra$ be a reduced homotopy functor where $\spectra$ denotes the category of symmetric spectra based on simplicial sets as in \cite{hovey/shipley/smith:2000}. Then there is an equivalence $G \weq \tilde{G}$ where $\tilde{G}:\spectra \to \spectra$ has $\tilde{G}(*) = *$.
\end{lemma}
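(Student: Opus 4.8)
The plan is to use the retract of $G$ cut out by the zero object and then strip it off on the nose.

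Since $\ast$ is both initial and terminal in $\spectra$, every spectrum $Z$ carries canonical maps $\ast \to Z \to \ast$ composing to $\mathrm{id}_\ast$; applying $G$ gives natural maps
\[ G(\ast) \xrightarrow{\ \iota_Z\ } G(Z) \xrightarrow{\ \rho_Z\ } G(\ast), \qquad \rho_Z \iota_Z = \mathrm{id}_{G(\ast)}, \]
so that $G(\ast)$ is a natural retract of the functor $G$, with $\iota_\ast = \rho_\ast = \mathrm{id}_{G(\ast)}$. Since $G$ is reduced, $G(\ast) \homeq \ast$, hence the collapse $G(\ast) \to \ast$ is a weak equivalence, and therefore the homotopy cofibre of $\iota_Z$ is naturally weakly equivalent to $G(Z)$ itself.

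So I would like to set $\tilde G(Z) := G(Z)/\iota_Z(G(\ast))$, the \emph{strict} cofibre of $\iota_Z$: this is again a simplicial homotopy functor (a colimit of simplicial functors), it comes with a natural map $G(Z)\to\tilde G(Z)$, and visibly $\tilde G(\ast) = G(\ast)/G(\ast) = \ast$. The point to verify is that $G(Z)\to\tilde G(Z)$ is a weak equivalence, i.e. that the strict cofibre already computes the homotopy cofibre; this holds as soon as $\iota_Z$ is a cofibration for every $Z$, and if moreover $\iota_\ast$ remains the isomorphism $\mathrm{id}_{G(\ast)}$ we still get $\tilde G(\ast)=\ast$. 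Thus, first I would replace $G$ by a weakly equivalent simplicial homotopy functor $G'$ whose section $\iota'_Z$ is a cofibration for all $Z$ while staying an isomorphism at $Z=\ast$, and then apply the strict-cofibre construction to $G'$.

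The hard part is the construction of such a $G'$, and it genuinely requires the concrete model of symmetric spectra on simplicial sets rather than abstract model-category formalism: an ordinary functorial factorisation of the split monomorphism of functors $\iota:\mathrm{const}_{G(\ast)}\Rightarrow G$ would replace $\iota_\ast=\mathrm{id}$ by a noninvertible cofibration, destroying strict reducedness, so the factorisation has to be chosen to be the identity wherever $\iota$ is already a cofibration — in particular over the zero object. Concretely one builds $G'$ from $G$ using a cellular/cofibrant replacement of $\spectra$ normalised so that $\ast$ is fixed, together with the retraction $\rho$, so that the replacement is compatible with the retract data. Establishing this compatibility is the main obstacle; once it is in place, everything else — that $\tilde G$ is a simplicial homotopy functor, naturally equivalent to $G$, and strictly reduced — follows formally. (This argument is due to Greg Arone.)
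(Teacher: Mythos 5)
Your setup is the same as the paper's (the split maps $G(*) \to G(Z) \to G(*)$ and the strict quotient $\tilde G(Z) := G(Z)/G(*)$), but the step you defer as ``the hard part'' --- constructing a replacement $G'$ whose section $\iota'_Z$ is a cofibration for all $Z$ while remaining the identity at $*$ --- is exactly where your argument is incomplete, and it is also unnecessary. The reason the lemma is stated for symmetric spectra based on \emph{simplicial sets} is that in simplicial sets every monomorphism is a cofibration. The map $\iota_Z : G(*) \to G(Z)$ is split, hence a levelwise injection of simplicial sets, hence automatically a levelwise cofibration; no cofibrant replacement of $G$, normalised at $*$ or otherwise, is needed. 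Since $G$ is reduced, $G(*)$ is a weakly contractible levelwise subspectrum of $G(Z)$, so the quotient map $G(Z) \to G(Z)/G(*)$ is a levelwise weak equivalence (collapsing a weakly contractible subcomplex along a cofibration of simplicial sets), and a levelwise weak equivalence of symmetric spectra is a stable weak equivalence. Clearly $\tilde G(*) = *$, and $\tilde G$ is again a simplicial homotopy functor, which is the whole proof.

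As written, your proposal does not prove the lemma: you explicitly leave the construction of $G'$ (the compatibility of a functorial factorisation with the retract data, fixing the value at $*$) as ``the main obstacle'' without carrying it out, and your concern that a generic functorial factorisation would destroy strict reducedness is a symptom of having taken the wrong route rather than an obstacle inherent to the statement. The missing idea is simply that split monomorphisms are already levelwise cofibrations in this model, so the strict cofibre computes the homotopy cofibre on the nose.
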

\begin{proof}
For any $X$, the composite $G(*) \to G(X) \to G(*)$ is the identity on $G(*)$. On the level of the simplicial sets that make up these symmetric spectra, this tells us that $G(*)$ can be identified with a levelwise subspectrum of $G(X)$. We then define
\[ \tilde{G}(X) := G(X)/G(*). \]
The natural map $G(X) \to \tilde{G}(X)$ is a levelwise weak equivalence because an inclusion of simplicial sets is a cofibration. It is therefore a stable weak equivalence and clearly $\tilde{G}(*) = *$.
\end{proof}

Now note the following important property of the map $r_n(F,G)$ constructed in Definition \ref{def:r_n(F,G)}.

\begin{lemma} \label{lem:T_n-commute}
The following diagram commutes:
\[ \begin{diagram}
  \node{FGX} \arrow{e,t}{t_n(FG)} \arrow{s,l}{(t_nF)(GX)} \node{T_n(FG)(X)} \\
  \node{(T_nF)(GX)} \arrow{ne,b}{r_n(F,G)}
\end{diagram} \]
\end{lemma}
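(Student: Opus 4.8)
The plan is to unwind the three maps in the triangle as maps of homotopy limits over the poset $\mathcal{P}$ of nonempty subsets $U \subseteq \{0,\dots,n\}$, and to reduce the claim to a single on-the-nose identity of maps of spectra, which one then reads off from the construction of $r_n(F,G)$. (Throughout, $G$ is simplicial with $G(*) = *$, as required for $r_n(F,G)$ to be defined in Definition \ref{def:r_n(F,G)}.)

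First I would fix notation: for a spectrum $Z$, let $j_U^Z \colon Z \to U * Z$ denote the canonical map of $Z$ into the homotopy cofibre $U * Z = \hocofib(\Wdge_U Z \to Z)$. This is natural in $Z$ and compatible with the structure maps $U * Z \to V * Z$ for $U \subseteq V$, so by Definition \ref{def:T_n} the natural transformation $t_n H \colon H \to T_n H$, for any homotopy functor $H$, is at an object $W$ the map $H(W) \to \holim_{U \in \mathcal{P}} H(U * W)$ induced by the maps $H(j_U^W)$. Taking $H = F$, $W = GX$, the map $(t_nF)(GX)$ is induced by $F(j_U^{GX}) \colon F(GX) \to F(U * GX)$; taking $H = FG$, $W = X$, the map $t_n(FG)$ is induced by $F(G(j_U^X)) \colon F(GX) \to F(G(U * X))$. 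And by Definition \ref{def:r_n(F,G)}, $r_n(F,G)$ is the map on $\holim_{U \in \mathcal{P}}$ induced by $F(\phi_U)$, where $\phi_U \colon U * GX \to G(U * X)$ is the natural comparison map built there. Since these are all maps of homotopy limits over $\mathcal{P}$ arising from strictly natural transformations of $\mathcal{P}$-diagrams, comparing components over $\mathcal{P}$ shows that the triangle commutes provided that, for each $U \in \mathcal{P}$,
\[ \phi_U \circ j_U^{GX} \; = \; G(j_U^X) \colon GX \to G(U * X). \]

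Second, I would prove this identity from the construction of $\phi_U$, which is the composite of the canonical comparison
\[ U * GX = \hocofib(\Wdge_U GX \to GX) \longrightarrow \hocofib(G(\Wdge_U X) \to GX) \]
(induced by the map of cofibre sequences that is the identity on the target column $GX$) with the map $\hocofib(G(\Wdge_U X) \to GX) \to G(\hocofib(\Wdge_U X \to X)) = G(U * X)$ of the form $\hocofib(GA \to GB) \to G(\hocofib(A \to B))$ manufactured in Definition \ref{def:r_n(F,G)} from the simplicial assembly $I \smsh G(-) \to G(I \smsh -)$. Each of these is compatible with the inclusion of the \emph{unconed} summand: the first restricts to the identity of $GX$, and the second --- being the identity on the $GB$-summand and the assembly map on the cone $I \smsh GA$ --- restricts on $GB$ to $G$ of the inclusion $B \hookrightarrow \hocofib(A \to B)$. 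Since $j_U^{GX}$ is exactly the inclusion of the unconed copy of $GX$ into $U * GX$, its composite with $\phi_U$ is $G$ of the inclusion $X \hookrightarrow U * X$, that is, $G(j_U^X)$ --- which is the required identity, modulo the compatibility of the assembly map with cone-gluing.

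The only real work lies in that last compatibility: one must track the basepoint $0 \in I$ through the iterated homotopy-cofibre presentation of the join and check that $I \smsh G(-) \to G(I \smsh -)$ is the evident map at the $\{0\} \smsh G(-)$ end, so that the cone-gluing used to build $\hocofib(GA \to GB) \to G(\hocofib(A \to B))$ is strictly compatible with the corresponding gluing performed inside $G$ (equivalently, that this comparison map agrees, on the $GB$-summand, with $G$ of the inclusion of the target into its mapping cone). Granting this, which is immediate from how the assembly map comes out of the simplicial enrichment, the identity $\phi_U \circ j_U^{GX} = G(j_U^X)$ holds on the nose for every $U$, and the commutativity of the triangle follows.
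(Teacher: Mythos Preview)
Your proof is correct and follows essentially the same approach as the paper. The paper's proof simply asserts that the triangle with vertices $GX$, $U * GX$, and $G(U * X)$ commutes (with the diagonal being the map from Definition \ref{def:r_n(F,G)}), whereas you have unwound the homotopy limits over $\mathcal{P}$ and then supplied the verification that $\phi_U \circ j_U^{GX} = G(j_U^X)$ by tracking the unconed summand through the assembly-map construction; this is exactly the content behind the paper's one-line claim.
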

\begin{proof}
This follows from the fact that the following diagram commutes
\[ \begin{diagram}
  \node{GX} \arrow{e} \arrow{s} \node{G(U * X)} \\
  \node{U * GX} \arrow{ne}
\end{diagram} \]
where the diagonal map is that described in Definition \ref{def:r_n(F,G)}.
\end{proof}

\begin{definition}
Now consider the maps
\[ T_n^k(r_n(F,G)): T_n^k((T_nF)G) \to T_n^{k+1}(FG). \]
Taking the homotopy colimit over increasing $k$, we get a map
\[ u_n(F,G): P_n((T_nF)G) \to P_n(FG). \]
\end{definition}

\begin{lemma} \label{lem:T_n-identity}
The composite
\[ \begin{diagram} \node{P_n(FG)} \arrow{e,t}{t_nF} \node{P_n((T_nF)G)} \arrow{e,t}{u_n(F,G)} \node{P_n(FG)} \end{diagram} \]
is homotopic to the identity.
\end{lemma}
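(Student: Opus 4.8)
The statement asserts that the composite $P_n(FG) \xrightarrow{t_nF} P_n((T_nF)G) \xrightarrow{u_n(F,G)} P_n(FG)$ is homotopic to the identity. The natural approach is to descend this from an identity already visible before passing to the homotopy colimit defining $P_n$. Concretely, I would track everything through the filtration $FG \to T_n(FG) \to T_n^2(FG) \to \cdots$ and the analogous filtration for $(T_nF)G$, and show that at each finite stage the relevant triangle commutes, so that in the colimit we recover the identity.

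**Key steps.** First, recall that $u_n(F,G)$ is by definition the homotopy colimit over $k$ of the maps $T_n^k(r_n(F,G)): T_n^k((T_nF)G) \to T_n^{k+1}(FG)$, and that the map $t_nF: FG \to (T_nF)G$ is induced by $t_nF: F \to T_nF$. So the composite $u_n(F,G) \circ P_n(t_nF)$ is the homotopy colimit of the maps obtained by applying $T_n^k$ to the composite
\[ T_n^k(FG) \xrightarrow{T_n^k((t_nF)(G-))} T_n^k((T_nF)G) \xrightarrow{T_n^k(r_n(F,G))} T_n^{k+1}(FG). \]
Second, I would invoke Lemma \ref{lem:T_n-commute}, which says precisely that $r_n(F,G) \circ (t_nF)(G-) = t_n(FG)$ as maps $FG \to T_n(FG)$. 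Applying the (homotopy) functor $T_n^k$ to this identity shows that the displayed composite equals $T_n^k(t_n(FG)): T_n^k(FG) \to T_n^{k+1}(FG)$ — but this is exactly the structure map in the filtration whose homotopy colimit defines $P_n(FG)$. Third, taking the homotopy colimit over $k$ of these structure maps recovers the identity on $P_n(FG)$, since a telescope maps to itself by its own structure maps via the identity (up to the canonical homotopy coming from the shift). This gives the claim.

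**Main obstacle.** The delicate point is keeping the two telescopes — the one for $(T_nF)G$ and the one for $FG$ — correctly aligned through the index shift: $u_n(F,G)$ is built from maps $T_n^k((T_nF)G) \to T_n^{k+1}(FG)$ that raise the $T_n$-count by one, and one must check that composing with $P_n(t_nF)$ and then taking the colimit produces the shift map on the $FG$-telescope rather than something off by a degree or requiring an extra application of $t_n(FG)$. This is essentially bookkeeping, controlled entirely by Lemma \ref{lem:T_n-commute} applied levelwise, together with the fact that $T_n$ preserves the relevant equivalences and commutes with the homotopy colimits in sight (so that $T_n^k$ of a telescope colimit is again a telescope). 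Once the diagram of telescopes is drawn with the correct indexing, the homotopy to the identity is immediate; no genuinely hard analysis is required, only care with the shift.
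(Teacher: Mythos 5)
Your proof is correct and follows essentially the same route as the paper: the paper's proof is simply the observation that the claim follows from Lemma \ref{lem:T_n-commute}, and your argument spells this out by applying that lemma levelwise through the telescopes defining $P_n$ and noting that the resulting shift map on the $FG$-telescope induces the identity on the homotopy colimit. Your attention to the index shift in $u_n(F,G)$ is exactly the bookkeeping the paper leaves implicit.
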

\begin{proof}
This follows from Lemma \ref{lem:T_n-commute}
\end{proof}

\begin{definition}
Composing the maps $u_n(T_n^kF,G)$ we get a map
\[ P_n((T_n^kF)G) \to P_n(FG) \]
that together define a map
\[ v_n(F,G): P_n((P_nF)G) \to P_n(FG) \]
which we will show is an inverse to the canonical map $P_n(FG) \to P_n((P_nF)G)$.
\end{definition}

\begin{lemma} \label{lem:P_n-identity}
The composite
\[ \begin{diagram} \node{P_n(FG)} \arrow{e,t}{p_nF} \node{P_n((P_nF)G)} \arrow{e,t}{v_n(F,G)} \node{P_n(FG)} \end{diagram} \]
is homotopic to the identity.
\end{lemma}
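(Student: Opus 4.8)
The plan is to deduce the lemma directly from Lemma~\ref{lem:T_n-identity} by examining the composite $v_n(F,G)\circ p_nF$ at a single finite stage of the homotopy colimit used to build $v_n(F,G)$. As throughout this section, we may and do assume that $G$ is strictly reduced, so that $r_n(F,G)$, $u_n(F,G)$ and $v_n(F,G)$ are all defined.

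First I would record that $P_nF=\hocolim_k T_n^kF$, so that $(P_nF)G=\hocolim_k (T_n^kF)G$ pointwise and hence, since $P_n$ commutes with homotopy colimits of spectrum-valued functors, $P_n((P_nF)G)\homeq\hocolim_k P_n((T_n^kF)G)$; write $\iota_k\colon P_n((T_n^kF)G)\to P_n((P_nF)G)$ for the canonical maps. The natural transformation $F\to P_nF$ factors up to homotopy as $F\xrightarrow{t_nF}T_nF\to P_nF$, so the canonical map $p_nF$ factors as $p_nF\homeq\iota_1\circ q$, where $q\colon P_n(FG)\to P_n((T_nF)G)$ is the map denoted $t_nF$ in Lemma~\ref{lem:T_n-identity}. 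On the other side, the construction of $v_n(F,G)$ out of the maps $u_n(T_n^jF,G)$ is exactly the statement that the composites
\[ w_k:=u_n(F,G)\circ u_n(T_nF,G)\circ\cdots\circ u_n(T_n^{k-1}F,G)\colon P_n((T_n^kF)G)\to P_n(FG) \]
assemble to a map from the homotopy colimit with $v_n(F,G)\circ\iota_k\homeq w_k$; in particular $v_n(F,G)\circ\iota_1\homeq w_1=u_n(F,G)$.

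Combining these two factorizations gives $v_n(F,G)\circ p_nF\homeq u_n(F,G)\circ q$, which is homotopic to the identity of $P_n(FG)$ by Lemma~\ref{lem:T_n-identity} applied to $F$, completing the argument. The only step with any content — and the one I expect to be the main (if minor) obstacle — is justifying that the $w_k$ really do glue over the homotopy colimit $\hocolim_k P_n((T_n^kF)G)$, i.e.\ that $w_{k+1}\circ\sigma_k\homeq w_k$ for the structure map $\sigma_k\colon P_n((T_n^kF)G)\to P_n((T_n^{k+1}F)G)$; since $w_{k+1}=w_k\circ u_n(T_n^kF,G)$ and $\sigma_k$ is the map induced by $t_n(T_n^kF)$, this reduces to the assertion that $u_n(T_n^kF,G)\circ\sigma_k$ is homotopic to the identity, which is Lemma~\ref{lem:T_n-identity} applied to $T_n^kF$ in place of $F$. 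Everything else is formal manipulation of homotopy colimits.
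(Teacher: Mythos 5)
Your argument is correct and is exactly the paper's (very terse) proof spelled out: the paper simply says the lemma follows from Lemma~\ref{lem:T_n-identity}, and your factorization of $p_nF$ through the first stage of $\hocolim_k P_n((T_n^kF)G)$, together with the observation that the compatibility $w_{k+1}\circ\sigma_k\homeq w_k$ is again Lemma~\ref{lem:T_n-identity} applied to $T_n^kF$, is precisely the intended justification. No gaps; you have just made explicit the colimit bookkeeping the paper leaves implicit.
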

\begin{proof}
This follows from Lemma \ref{lem:T_n-identity}.
\end{proof}

Lemma \ref{lem:P_n-identity} shows that the canonical map
\[ p_nF: P_n(FG) \to P_n((P_nF)G) \]
has a left inverse, but it is far from obvious that the definition we have given for $v_n(F,G)$ is also a right inverse. However, we can use the following trick to complete the proof of part (1) of Proposition \ref{prop:fundamental}.

\begin{proof}[Proof of part (1) of Proposition \ref{prop:fundamental}]
Let $Q_{n+1}F$ denote the homotopy fibre of the map
\[ p_nF: F \to P_nF. \]
Since $P_n$ commutes with homotopy fibre, part (1) of Proposition \ref{prop:fundamental} will follow if we can show that
\[ P_n((Q_{n+1}F)G) \homeq *. \]
Now consider Lemma \ref{lem:P_n-identity} applied to the functors $Q_{n+1}F$ and $G$. We then see that the composite
\[ P_n((Q_{n+1}F)G) \to P_n((P_nQ_{n+1}F)G) \to P_n((Q_{n+1}F)G) \]
is homotopic to the identity.  But the middle term here is trivial since $P_n(Q_{n+1}F) \homeq *$. It then follows that
\[ P_n((Q_{n+1}F)G) \homeq * \]
as required.
\end{proof}

\begin{remarks} \label{rem:part1} \hfill
\begin{enumerate}
  \item Part (1) of Proposition \ref{prop:fundamental} is true for functors from spaces to spaces and our proof carries over directly to that case.
  \item There should be a more general version that is true without the requirement that $G$ be reduced. This would say that
  \[ P_n(FG) \to P_n((P_n^{G(*)}F)G) \]
  is an equivalence, where $P_n^{G(*)}F$ denotes the \ord{n} term in the Taylor tower of $F$ expanded at the point $G(*)$.
\end{enumerate}
\end{remarks}

We now turn to part (2) of Proposition \ref{prop:fundamental}. Our proof of this will have a distinctly different flavour than that of part (1). A proof along similar lines is probably possible, and we could construct at least a one-sided inverse to the map $P_n(FG) \to P_n(F(P_nG))$ in much the same way. However, the trick we used in the proof of part (1) will not work since $F(Q_{n+1}G)$ is not necessarily equivalent to the homotopy fibre of the map $FG \to F(P_nG)$.

Instead, we give a proof by induction on the Taylor tower of $F$. The main part of this induction will be showing that the result is true when $F$ is homogeneous, so we start by proving this.

\begin{lemma} \label{lem:finitarycase}
Let $F: \spectra \to \spectra$ be a finitary $k$-homogeneous functor and let $G: \spectra \to \spectra$ be any homotopy functor. Then the canonical map
\[ P_n(FG) \to P_n(F(P_nG)) \]
is an equivalence.
\end{lemma}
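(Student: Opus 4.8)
The plan is to reduce to the multilinear case, exactly as in the proofs of Proposition \ref{prop:equiv_hom} and Theorem \ref{thm:hom}. Since $F$ is $k$-homogeneous, we can write $F(X) \homeq L(X,\dots,X)_{h\Sigma_k}$ for a finitary symmetric multilinear functor $L$ of $k$ variables, and since $P_n$ commutes with homotopy orbits for spectrum-valued functors, it suffices to prove the equivalence $P_n(L(G,\dots,G)) \to P_n(L(P_nG,\dots,P_nG))$ for a multilinear $L$. Write $FG$ in the form $L(G,\dots,G)$ from now on.

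First I would recall from Proposition \ref{prop:p_n(FG)} that the Taylor tower of $L(G,\dots,G)$ is computed by the functors $p_n(FG)(X) = \holim_{\Pi_k(n)} L(P_{r_1}G(X),\dots,P_{r_k}G(X))$ of Definition \ref{def:p_n(FG)}. The point is that this expression only involves $G$ through the functors $P_rG$ for $r \leq n$, and each of these satisfies $P_r(P_nG) \homeq P_rG$ for $r \leq n$ by the basic properties of the Taylor tower (since $P_nG$ is $n$-excisive and $P_r$ of an $n$-excisive functor recovers its $r$-excisive approximation). Therefore the diagram $\mathcal{X}_n$ that defines $p_n(FG)$ and the corresponding diagram defining $p_n(F(P_nG))$ are termwise equivalent: at the object $(r_1,\dots,r_k)$ of $\Pi_k(n)$ we have $L(P_{r_1}G,\dots,P_{r_k}G) \homeq L(P_{r_1}(P_nG),\dots,P_{r_k}(P_nG))$ because each $r_i \leq n$, and this equivalence is compatible with the structure maps of the two diagrams. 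It follows that $p_n(FG) \homeq p_n(F(P_nG))$, and hence, applying Proposition \ref{prop:p_n(FG)} to both $G$ and $P_nG$ (both of which are reduced — note $P_nG$ is reduced since $G$ is), we get $P_n(FG) \homeq p_n(FG) \homeq p_n(F(P_nG)) \homeq P_n(F(P_nG))$. Finally I would check that this chain of equivalences is induced by the canonical map $FG \to F(P_nG)$: the maps $FG \to p_n(FG)$ and $F(P_nG) \to p_n(F(P_nG))$ of Definition \ref{def:p_n(FG)} are compatible with the canonical map via the termwise equivalence of diagrams, so the identification above is the one induced by $P_n$ applied to $FG \to F(P_nG)$, as required.

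The main obstacle, such as it is, lies in being careful that the termwise equivalence of the diagrams $\mathcal{X}_n$ is genuinely natural and compatible with the canonical map $FG \to F(P_nG)$, rather than merely an abstract equivalence of homotopy limits. Concretely, one needs that for $r \leq n$ the natural transformation $P_rG \to P_r(P_nG)$ induced by $G \to P_nG$ is an equivalence — this is the statement that $P_r$ of $G \to P_nG$ is an equivalence for $r \leq n$, which holds because $G \to P_nG$ is an equivalence on $m$-excisive approximations for all $m \leq n$. Everything else — that $P_nG$ is reduced, that $L$ is finitary so Proposition \ref{prop:p_n(FG)} applies, that $P_n$ commutes with the $\Sigma_k$-homotopy orbits needed to pass from $L$ back to $F$ — is routine and already recorded in the earlier sections. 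So the proof is essentially a bookkeeping argument built on Proposition \ref{prop:p_n(FG)}.
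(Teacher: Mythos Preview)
Your argument is circular. You invoke Proposition \ref{prop:p_n(FG)} to identify $P_n(FG)$ with $p_n(FG)$ for a general (non-excisive) $G$, but look at how that proposition is proved in the paper: the case of arbitrary $G$ is reduced to the excisive case via the commutative square whose top edge $P_n(FG) \to P_n(F(P_nG))$ is declared an equivalence \emph{by part (2) of Proposition \ref{prop:fundamental}}. And the proof of Proposition \ref{prop:fundamental}(2) rests precisely on Lemma \ref{lem:finitarycase}, the statement you are trying to establish. So the logical dependency in the paper runs Lemma \ref{lem:finitarycase} $\Rightarrow$ Proposition \ref{prop:fundamental}(2) $\Rightarrow$ Proposition \ref{prop:p_n(FG)}, and you are trying to run it backwards.

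What survives of your argument without circularity is this: the termwise equivalence $p_n(FG) \homeq p_n(F(P_nG))$ is genuine, and Proposition \ref{prop:p_n(FG)} applied to the \emph{excisive} functor $P_nG$ (that case of the proof does not use \ref{prop:fundamental}(2)) gives $p_n(F(P_nG)) \homeq P_n(F(P_nG))$. But you still need $P_n(FG) \homeq p_n(FG)$, and for that you have no independent argument. The paper's proof avoids this trap by working directly: it factors $L(G,\dots,G) \to L(P_nG,\dots,P_nG)$ as a composite changing one slot at a time, identifies each homotopy fibre as $L(\dots,Q_{n+1}G,\dots)$, and then uses Lemma \ref{lem:reduced} plus \cite[6.10]{goodwillie:2003} to see that each such fibre is $(n+1)$-reduced. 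That argument uses only results proved earlier and independently of Section \ref{sec:homo}. (A minor secondary point: the lemma as stated does not assume $G$ reduced, so your parenthetical ``note $P_nG$ is reduced since $G$ is'' also assumes a hypothesis that is not there.)
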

\begin{proof}
Since $F$ is $k$-homogeneous, we know that
\[ F(X) \homeq L(X,\dots,X)_{h\Sigma_k} \]
for some symmetric multilinear functor $L: \spectra^k \to \spectra$. Since $P_n$ commutes with homotopy orbits for spectrum-valued functors, it is then sufficient to show that the map
\[ L(G,\dots,G) \to L(P_nG,\dots,P_nG) \]
is an equivalence after applying $P_n$. To see this, we write this map as a composite
\[ \begin{split} L(G,\dots,G) &\to L(P_nG,G,\dots,G) \\
                            &\to \dots \\
                            &\to L(P_nG,\dots,P_nG) \end{split} \]
of maps where we change one of the factors from $G(X)$ to $P_nG(X)$ at a time. It is then enough to show that each of these maps is a $P_n$-equivalence.

The homotopy fibre of one of these maps looks like
\[ \tag{*} L(P_nG,\dots,P_nG,Q_{n+1}G,G,\dots,G) \]
where $Q_{n+1}G = \hofib(G \to P_nG)$. This is because linear functors preserve fibre sequences. It is now sufficient to show that $P_n$ of this is trivial.

But now consider the functor
\[ (X_1,\dots,X_k) \mapsto L(P_nG(X_1),\dots,P_nG(X_{i-1}), Q_{n+1}G(X_i), G(X_{i+1}),\dots,G(X_k)). \]
Since $Q_{n+1}G$ is $(n+1)$-reduced, this functor is at least $(0,\dots,0,n+1,0,\dots,0)$-reduced, by Lemma \ref{lem:reduced}. But then by \cite[6.10]{goodwillie:2003}, it follows that (*) above is at least $n+1$-reduced, which is precisely what we wanted to show.
\end{proof}

We can now carry out the induction and complete the proof of part (2) of Proposition \ref{prop:fundamental}.

\begin{proof}[Proof of part (2) of Proposition \ref{prop:fundamental}]
First consider the following diagram:
\[ \begin{diagram}
  \node{P_n(FG)} \arrow{e,t}{\sim} \arrow{s} \node{P_n((P_nF)G)} \arrow{s} \\
  \node{P_n(F(P_nG))} \arrow{e,t}{\sim} \node{P_n((P_nF)(P_nG))}
\end{diagram} \]
The horizontal maps are equivalences by part (1) of Proposition \ref{prop:fundamental}. This reduces to the case where $F$ is $n$-excisive.

Now consider the diagram
\[ \begin{diagram}
  \node{(D_kF)G} \arrow{e} \arrow{s} \node{(D_kF)P_nG} \arrow{s} \\
  \node{(P_kF)G} \arrow{e} \arrow{s} \node{(P_kF)P_nG} \arrow{s} \\
  \node{(P_{k-1})G} \arrow{e} \node{(P_{k-1}F)P_nG}
\end{diagram} \]
The vertical maps form fibre sequences. The top horizontal map is an equivalence by Lemma \ref{lem:finitarycase}. By induction, we can assume that the bottom horizontal map is an equivalence. It follows that the middle horizontal map is an equivalence. The base case of the induction concerns the map
\[ P_n((P_0F)G) \to P_n((P_0F)(P_nG)) \]
which is an equivalence because each side is just the constant functor with value $F(*)$. This completes the proof.
\end{proof}

\begin{remarks} \hfill
\begin{enumerate}
  \item Our proof of part (2) of Proposition \ref{prop:fundamental} does not carry over directly to the case of functors of based spaces. However, the proposition is still true in that context. To prove this, we use the characterization of finitary homogeneous functors of based spaces as of the form
      \[ X \mapsto \Omega^\infty(E \smsh (\Sigma^\infty X)^{\smsh k})_{h\Sigma_k}. \]
      To complete the proof, we then need to prove the additional claim that
      \[ P_n(\Sigma^\infty G) \to P_n(\Sigma^\infty P_nG) \]
      is an equivalence (i.e. we have reduced to the case $F = \Sigma^\infty$). This is not immediately obvious but follows by showing that the homotopy \emph{cofibre} of $G \to P_nG$ is $n+1$-reduced.
  \item The condition that $G$ is reduced should not be necessary here. The only place it is used is when we apply part (1) of Proposition \ref{prop:fundamental} to reduce to the case that $F$ is $n$-excisive. As we suggested in \ref{rem:part1}(2), there should be a corresponding version of that result in the case that $G$ is not reduced that is based on the Taylor tower for $F$ expanded at $G(*)$. With such a result, we would prove part (2) of Proposition \ref{prop:fundamental} by carrying out the induction on that Taylor tower instead.
\end{enumerate}
\end{remarks}

\bibliographystyle{amsplain}
\bibliography{mcching}

\end{document}